\documentclass{article}
\usepackage[all]{xy}
\usepackage{amsmath}
\usepackage{amssymb}
\usepackage{amsfonts}
\usepackage{amsthm}
\usepackage{array}
\usepackage{graphicx}
\usepackage{float}
\usepackage{color}
\usepackage{fullpage}
\newcommand{\Char}{\mathrm{char}\:}
\newcommand{\bbf}{\mathbb{F}}
\newcommand{\catmod}{\textbf{mod}}
\newtheorem{lemma}{Lemma}[section]
\newtheorem{thm}[lemma]{Theorem}
\newtheorem*{thm*}{Theorem}
\newtheorem{propn}[lemma]{Proposition}
\newtheorem{cor}[lemma]{Corollary}
\theoremstyle{definition}
\newtheorem{definition}[lemma]{Definition}
\newtheorem*{definition*}{Definition}
\newtheorem{example}[lemma]{Example}
\theoremstyle{remark}
\newtheorem*{rem*}{Remark}
\newtheorem{rem}[lemma]{Remark}
\begin{document}
\title{The blocks of the Brauer algebra in \mbox{characteristic $p$}}
\author{Oliver King}
\date{}
\maketitle
\begin{abstract}
Brauer algebras form a tower of cellular algebras. There is a well-defined notion of limiting blocks for these algebras. In this paper we give a complete description of these limiting blocks over any field of positive characteristic. We also prove the existence of a class of homomorphisms between cell modules.
\end{abstract}

\section{Introduction}
Classical Schur-Weyl duality relates the representation theory of the symmetric group $S_n$ and general linear group $GL_m$, via their action on the tensor space $(\mathbb{C}^m)^{\otimes n}$. Given this setup it is natural to ask if it is possible to find other algebras that centralise each others' actions, in particular if we replace the general linear group by the orthogonal group $O_m$ or the symplectic group $Sp_{m}$ (in the case $m$ is even). The Brauer algebra $B_n(\delta)$ was introduced in \cite{brauer} to provide this corresponding dual for suitable integral values of $\delta$.

It is possible however, to define the Brauer algebra $B_n^R(\delta)$ over an arbitrary ring $R$, for any $n\in\mathbb{N}$ and $\delta\in R$. Then, rather than examining it as a way of understanding its corresponding centraliser algebra in the context of Schur-Weyl duality, we study the representation theory of $B_n^R(\delta)$ in its own right.

Graham and Lehrer \cite{grahamlehrer} showed that the Brauer algebra $B_n^\bbf(\delta)$ over a field $\bbf$ is a cellular algebra, with cell modules indexed by partitions of $n,n-2,
 n-4,\dots,0$ or $1$ (depending on the parity of $n$). If \mbox{$\Char \bbf=0$} then these partitions also label a complete set of non-isomorphic simple modules, given by the heads of the corresponding cell modules. If $\Char \bbf=p>0$ then the simple modules are indexed by the subset of $p$-regular partitions. The question of computing decomposition matrices for these cell modules over any field is then raised, a problem which remains open in positive characteristic.

However, much work has been done in computing the block structure of these algebras, a portion of which is summarised below.

Wenzl \cite{wenzl} proved that over $\mathbb{C}$, the Brauer algebra is semisimple for all non-integer values of $\delta$. Motivated by this, Rui \cite{rui} provided a necessary and sufficient condition for semisimplicity, valid over an arbitrary field.

We say that two partitions are in the same block for $B_n^\bbf(\delta)$ if they label cell modules in the same block. A necessary and sufficient condition for two partitions to be in the same block if $\Char \bbf=0$ was given in \cite{blocks}, using the theory of towers of recollement \cite{cmpx}. It was shown in \cite{geom} that this is equivalent to partitions being in the same orbit under some action of a Weyl group $W$ of type $D$. In the same paper, it was found that in the case \mbox{$\Char \bbf=p>2$}, the orbits of the corresponding affine Weyl group $W_p$ of type $D$ on the set of partitions correspond to unions of blocks of the Brauer algebra $B_n^\bbf(\delta)$.

In this paper we investigate the representations of $B_n^\bbf(\delta)$ when \mbox{$\Char\bbf=p>0$}. Following \cite{cmpx} we can embed the module categories $B_n^\bbf(\delta)$-\catmod\, in $B_{n+2}^\bbf(\delta)$-\catmod, and we will see that this leads to a well-defined limit of the blocks as $n\rightarrow\infty$. We will then use an adaptation of the abacus method of representing partitions \cite{jameskerber} and the geometric results from \cite{geom} to show that these limiting blocks correspond precisely to the orbits of the affine Weyl group of type $D$ on the set of partitions.

In Section 2 we provide the notation and definitions used subsequently, as well as a review of some basic results. Section 3 introduces a variation of the abacus and describes some movement of beads across runners.

Section 4 makes use of this to derive some results regarding the blocks of the Brauer algebra. In particular we have the main result of this paper \mbox{(Theorem \ref{thm:limorbs})}, which states that in the limiting case the blocks are equal to the orbits of the affine Weyl group. In Section 5 we prove the existence of a class of homomorphisms between cell modules.

\section{Preliminaries}

In this section we will set up the framework for what follows, briefly review the modular representation theory of the symmetric group and present some background on the representation theory of the Brauer algebra.\\
\\
Let $(K,R,k)$ be a $p$-modular system, i.e.
	\begin{itemize}
		\item $R$ is a discrete valuation ring
		\item $R$ has maximal ideal $\mathfrak{m}=(\pi)$
		\item $K=\mathrm{Frac}(R)$ is the field of fractions
		\item $k=R/\mathfrak{m}$ is the residue field of characteristic $p$.
	\end{itemize}
Now let $A$ be an $R$-algebra, free and of finite rank as an $R$-module. We can extend scalars to produce the $K$-algebra \mbox{$KA=K\otimes_RA$} and the $k$-algebra \mbox{$kA=k\otimes_RA$}. Given an $A$-module X, we can then also consider the $KA$-module $KX=K\otimes_RX$ and the $kA$-module $kX=k\otimes_RX$.

As an $R$-algebra, $A$ can be uniquely decomposed into a direct sum of subalgebras
	\[A=\bigoplus A_i\] 
where each $A_i$ is indecomposable as an algebra. We call the $A_i$ the \emph{blocks} of $A$. For any $A$-module $M$ there exists a similar decomposition
	\[M=\bigoplus M_i\]
where for each $i$,
	\[A_iM_i=M_i,~~~A_jM_i=0~(\forall j\neq i)\]
We say that the module $M_i$ \emph{lies in the block} $A_i$. Clearly, each simple $A$-module must lie in precisely one block.

In what follows, we will use the terms $K$-block and $k$-block to indicate that we are considering the blocks of the algebra over the fields $K$ and $k$ respectively.

\subsection*{Modular representation theory of the symmetric group}

A more detailed account of the results in this section can be found in \cite{jameskerber}.\\
\\
Let $\bbf$ be a field. We denote by $S_n$ the symmetric group on $n$ letters, and by $\bbf S_n$ the corresponding group algebra over the field $\bbf$. For each partition $\lambda=(\lambda_1,\dots,\lambda_l)$ of $n$ there is a corresponding $\bbf S_n$-module $S^\lambda_\bbf$, called a \emph{Specht module}. If $\bbf=K$, then these provide a complete set of non-isomorphic simple $KS_n$-modules. However if $\bbf=k$, then this may not be the case.\\ 
We say that a partition $\lambda=(\lambda_1,\dots,\lambda_l)$ is \emph{$p$-singular} if there exists $t$ such that
	\[\lambda_t=\lambda_{t+1}=\dots=\lambda_{t+p-1}>0\]
Partitions that are not $p$-singular we call \emph{$p$-regular}.\\
Following \cite{jameskerber} we see that for each $p$-regular partition $\lambda$, the Specht module $S^\lambda_k$ has a simple head $D^\lambda_k$, and that these form a complete set of non-isomorphic simple $kS_n$-modules.\\
\\
With this information, we now provide a description of the $k$-blocks of this algebra. To each partition $\lambda$ we may associate the Young diagram 
	\[[\lambda]=\{(x,y)~|~x,y\in\mathbb{Z},~1\leq x\leq l,~1\leq y \leq \lambda_x\}\]
An element $(x,y)$ of $[\lambda]$ is called a \emph{node}.\\
The \emph{hook} $h_{(x,y)}$ corresponding to the node (x,y) in the Young diagram is the subset
	\[ h_{(x,y)}=\{(i,j)\in[\lambda]~|~i\geq x,~j\geq y\} \]
consisting of $(x,y)$ and all the nodes either below or to the right of it. A $p$-hook is a hook containing $p$-nodes.

	\begin{figure}[H]
		\centering
		\includegraphics[scale=0.3]{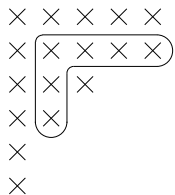}
		\caption{The Young diagram of $\lambda=(5^2,3,2,1^2)$ with the hook $h_{(2,2)}$ circled}
		\label{fig:younghook}
	\end{figure}

Each hook corresponds to a \emph{rim hook}, obtained by moving each node of the hook down and to the right so that it lies on the edge of the Young diagram. An example is given below.

	\begin{figure}[H]
		\centering
		\includegraphics[scale=0.3]{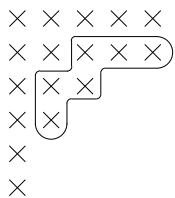}
		\caption{The rim hook corresponding to $h_{(2,2)}$. The node $(2,2)$ has moved to the rim.}
		\label{fig:rimhook}
	\end{figure}

Given a partition $\lambda$ we can successively remove rim $p$-hooks until we have reached a point that we can remove no more. What remains is called the $p$-core of $\lambda$, and the number of rim hooks removed to reach this is called the $p$-weight. It is shown in \cite[Chapter 2.7]{jameskerber} that the $p$-core is independent of the order in which we remove such hooks, and therefore both these notions are well-defined. We now state a useful theorem:

	\begin{thm}[Nakayama's Conjecture]
		Two partitions $\lambda$ and $\mu$ label Specht modules in the same $k$-block for the symmetric group algebra if and only if they have the same $p$-core and $p$-weight.
		\label{thm:nakayama}
	\end{thm}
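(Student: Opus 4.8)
The plan is to translate the block-theoretic statement into a statement about central characters, and then extract the $p$-core combinatorially using the Murnaghan--Nakayama rule. Recall from standard block theory that two Specht modules lie in the same $k$-block for $kS_n$ precisely when they afford the same central character modulo $\mathfrak{m}$; that is, when the block idempotents act on them in the same way. Since the centre of $RS_n$ is spanned by the conjugacy class sums $C_\rho$ indexed by cycle types $\rho\vdash n$, the central character of $S^\lambda_K$ is given by the scalars
\[
\omega_\lambda(C_\rho)=\frac{|C_\rho|\,\chi^\lambda(\rho)}{\chi^\lambda(1)},
\]
and $\lambda,\mu$ share a $k$-block exactly when $\omega_\lambda(C_\rho)\equiv\omega_\mu(C_\rho)\pmod{\mathfrak{m}}$ for every $\rho$. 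First I would reduce to partitions of a common $n$, where the relation $n=|\text{$p$-core}(\lambda)|+p\cdot w(\lambda)$ forces the $p$-weight $w(\lambda)$ to be determined by the $p$-core, so that it suffices to characterise the $p$-core alone.

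For the forward implication (same block $\Rightarrow$ same $p$-core) I would specialise $\rho$ to cycle types carrying a $p$-part and apply the Murnaghan--Nakayama rule, which expresses $\chi^\lambda(\rho)$ as an alternating sum over successive removals of rim $p$-hooks from $[\lambda]$. Reducing modulo $\mathfrak{m}$, I would argue that the resulting values are governed entirely by the rim-$p$-hook combinatorics of $[\lambda]$, and hence by $\text{$p$-core}(\lambda)$. An induction on the $p$-weight, peeling off one rim $p$-hook at a time, should then show that the full collection $\{\omega_\lambda(C_\rho)\bmod\mathfrak{m}\}$ determines the $p$-core uniquely, giving the implication.

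For the converse (same $p$-core $\Rightarrow$ same block) I would show that the set of partitions of $n$ with a fixed $p$-core lies in a single block. Using the abacus description developed later in the paper, any two such partitions are joined by a chain of elementary moves, each sliding a single bead one position along a runner, thereby changing the $p$-weight by one while preserving the $p$-core. For each such move I would exhibit a module-theoretic link---a nonzero homomorphism between the relevant Specht modules, or a shared composition factor---so that consecutive partitions in the chain are forced into the same block; transitivity then collapses the whole set into one block. Combining the two implications yields the stated equivalence.

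I expect the main obstacle to be the converse direction. The central-character computation delivers the negative statement (distinct cores force distinct blocks) fairly cleanly, but the positive linkage---verifying that partitions sharing a $p$-core genuinely cannot be separated into different blocks---requires constructing actual homomorphisms or extensions between Specht modules rather than merely comparing scalars, and propagating these links across an entire chain of weight-changing moves is the delicate part of the argument.
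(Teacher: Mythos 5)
First, note that the paper does not prove this theorem at all: it is the classical Brauer--Robinson theorem and the text simply cites \cite[Chapter 6]{jameskerber} for a proof, so your attempt is being measured against that standard argument rather than anything in the paper. Your forward direction (same block $\Rightarrow$ same $p$-core) via central characters and the Murnaghan--Nakayama rule is the standard route and is fine in outline, though as written it is only a sketch: the real content is the inductive computation showing that the residues $\omega_\lambda(C_\rho)\bmod\mathfrak{m}$, taken over all cycle types $\rho$, determine the $p$-core, and none of that is carried out.

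The genuine gap is in your converse. A single bead sliding one position down (or up) a runner adds (or removes) a rim $p$-hook and so changes $|\lambda|$ by $p$: the intermediate terms of your chain are not partitions of $n$, they label Specht modules for different symmetric groups, and ``same block of $kS_n$'' is meaningless for them. To stay inside the partitions of $n$ with a fixed $p$-core you must use the paired move the paper calls $a^r_{(i,j)}$ (one bead down $r$ places, another up $r$ places), and even then the claim that each such pair of Specht modules admits a nonzero homomorphism or a visibly shared composition factor is a substantial unproved assertion --- two Specht modules in the same block need not be directly linked in this way; block membership only guarantees a chain through other modules. More to the point, the detour is unnecessary: the central-character criterion you quote is an if-and-only-if, so the converse also reduces to the purely arithmetic statement that $\omega_\lambda(C_\rho)\bmod\mathfrak{m}$ depends only on the $p$-core of $\lambda$. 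That single computation (which is how the cited proof proceeds) delivers both implications at once; your plan to replace it with explicit module-theoretic linkage is both harder and, as currently set up, not well-posed.
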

	
A proof of this result can be found in \cite[Chapter 6]{jameskerber}.

\subsection*{The Brauer algebra}

A more detailed account of the results in this section can be found in \cite{blocks} and \cite{geom}.\\
\\
For a fixed $\delta\in R$ and $n\in\mathbb{N}$, the Brauer algebra $B_n^R(\delta)$ can be defined as the set of linear combinations of diagrams with $2n$ nodes, arranged in two rows of $n$, and $n$ arcs between them so that each node is joined to precisely one other. Multiplication of two diagrams is by concatenation in the following way: to obtain the result $x\cdot y$ given diagrams $x$ and $y$, place $x$ on top of $y$ and identify the bottom nodes of $x$ with those on the top of $y$. This new diagram may contain a number, $t$ say, of closed loops. These we remove and multiply the final result by $\delta^t$. An example is given in Figure \ref{fig:brauermult} below.

	\begin{figure}[H]
		\centering
		\includegraphics{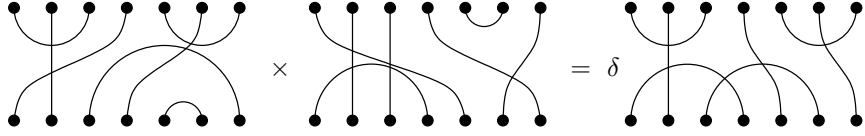}
		\caption{Multiplication of two diagrams in $B_7^R(\delta)$}
		\label{fig:brauermult}
	\end{figure}
	
We may then define the algebras 
	\[ B_n^K(\delta)=K\otimes_RB_n^R(\delta)\text{~~~~and~~~~}B_n^k(\bar\delta)=k\otimes_RB_n^R(\delta)\]
where $\bar\delta$ is the modular reduction of $\delta$ in $k$.\\
\\
Fix a field $\bbf$ (for our purposes this will be either $K$ or $k$) and assume that $\delta\neq0$ in $\bbf$. For each $n\geq2$ we have an idempotent $e_n\in B_n^\bbf(\delta)$ as illustrated in Figure \ref{fig:idem}.

	\begin{figure}[H]
		\centering
		\includegraphics{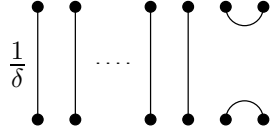}
		\caption{The idempotent $e_n$}
		\label{fig:idem}
	\end{figure}
	
Using these idempotents we can define algebra isomorphisms
	\begin{equation}
		\Phi_n:B_{n-2}^\bbf(\delta) \longrightarrow e_nB_n^\bbf(\delta)e_n \label{eq:phin}
	\end{equation}
taking a diagram in $B_{n-2}^\bbf(\delta)$ to the diagram in $B_n^\bbf(\delta)$ obtained by adding an extra northern and southern arc to the right hand end.
Using this and following \cite{green} we obtain an exact localisation functor
	\begin{eqnarray}
		F_n:B_n^\bbf(\delta)\text{-\catmod}	&\longrightarrow&	B_{n-2}^\bbf(\delta)\text{-\catmod}\label{eq:fn}\\
					 			 M 	&\longmapsto&    	e_nM\nonumber
	\end{eqnarray}
and a right exact globalisation functor
	\begin{eqnarray}
		G_n:B_n^\bbf(\delta)\text{-\catmod}	&\longrightarrow&	B_{n+2}^\bbf(\delta)\text{-\catmod}\label{eq:gn}\\
					  M 				&\longmapsto&	B_{n+2}^\bbf(\delta)e_{n+2}\otimes_{B_n^\bbf(\delta)}M\nonumber
	\end{eqnarray}
Since $F_{n+2}G_n(M)\cong M$ for all $M\in B_n^\bbf(\delta)$-\catmod, $G_n$ is a full embedding of categories. Also as
	\begin{equation} 
		B_n^\bbf(\delta)/B_n^\bbf(\delta)e_nB_n^\bbf(\delta) \cong \bbf S_n
		\label{eq:symmquotient}
	\end{equation}
the group algebra of the symmetric group on $n$ letters, we have from \eqref{eq:phin} and \cite{green} that the simple $B_n^\bbf(\delta)$-modules are indexed by the set
	\begin{equation}
		\Lambda_n=\Lambda^n\sqcup\Lambda_{n-2}=\Lambda^n\sqcup\Lambda^{n-2}\sqcup\dots\sqcup\Lambda^{0/1}
		\label{eq:indexset}
	\end{equation}
(depending on the parity on $n$), where $\Lambda^n$ is an indexing set for simple \mbox{$\bbf S_n$-modules}.\\
\\
Graham and Lehrer \cite{grahamlehrer} showed that $B_n^\bbf(\delta)$ is a cellular algebra, with cell modules $\Delta_n^\bbf(\lambda)$, indexed by partitions \mbox{$\lambda \vdash n,n-2,$} \mbox{$n-4,\dots,0/1$} (depending on the parity of $n$). When $\lambda\vdash n$, this is simply a lift of the Specht module $S^\lambda_\bbf$ to the Brauer algebra using \eqref{eq:symmquotient}. When $\lambda\vdash n-2t$ for some $t>0$, we obtain the cell module by
	\[ \Delta_n^\bbf(\lambda)\cong G_{n-2}G_{n-4}\dots G_{n-2t}\Delta_{n-2t}^\bbf(\lambda) \]
	
Over $K$, each of these modules has a simple head $L_n^K(\lambda)$, and these form a complete set of non-isomorphic simple $B_n^K(\delta)$-modules.

Over $k$, the heads $L_n^k(\lambda)$ of cell modules labelled by $p$-regular partitions provide a complete set of non-isomorphic simple $B_n^k(\delta)$-modules.
 \\ \\
We can also define modules $\Delta_n^R(\lambda)$ over $R$, and as shown in \cite{hartmannpaget} these will be cell modules for $B_n^R(\delta)$. Let a \emph{$(n,t)$-partial digram} be a row of $n$ nodes with $t$ edges, so that each node is connected to at most one other. We call those nodes not connected to another $\emph{free nodes}$.

	\begin{figure}[H]
	\centering
	\includegraphics{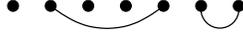}
	\caption{An example of a (7,2)-partial diagram}
	\end{figure}

For a fixed $n$, let $V_t$ be the free $R$-module with basis all the $(n,t)$-partial diagrams. We can define an action of $B_n^R(\delta)$ on $V_t$ as follows: given a Brauer diagram $x\in B^R_n(\delta)$ and a $(n,t)$-partial digram $v\in V_t$, let the product $xv$ be obtained by placing $v$ on top of $x$, identifying the top row of $x$ with $v$, and following the edges from the bottom row of $x$. This results in a new partial diagram $w$, and a number, $j$ say, of closed loops on the top row. We then set $xv=\delta^jw$ if $w$ has exactly $t$ edges, and $xv=0$ otherwise.

Given a partition $\lambda\vdash n-2t$, we form the module $\Delta_n^R(\lambda)=V_t\otimes S^\lambda_R$, where $S^\lambda_R$ is the $R$-form of the usual Specht module (see \cite[Chapter 7]{jameskerber} for details). We then have an action of $B_n^R(\delta)$ on this module: given a Brauer diagram $x\in B^R_n(\delta)$ and a pure tensor $v\otimes s\in\Delta_n^R(\lambda)$, we define the element 
	
	\[x(v\otimes s)=(xv)\otimes\sigma(x,v)s\]
	
where $xv$ is given above and $\sigma(x,v)\in S_{n-2t}$ is the permutation on the free nodes of $xv$.

In a similar manner to previously, we then have
	\[\Delta_n^K(\lambda)=K\otimes_R\Delta_n^R(\lambda)~~~ \text{and}~~~\Delta_n^k(\lambda)=k\otimes_R\Delta_n^R(\lambda)\]
	
	\begin{rem*}
		Note that the simple modules may not have an $R$-form, so we \textbf{cannot} in general provide a module $L^R_n(\lambda)$ such that $L_n^K(\lambda)=K\otimes_R L_n^R(\lambda)$ or $L_n^k(\lambda)=k\otimes_R L_n^R(\lambda)$.
	\end{rem*}
	
As we did with the symmetric group algebra, we wish to characterise the blocks of $B_n^\bbf(\delta)$. A geometric description of these blocks in characteristic $0$ is given in \cite{geom}. A brief account of this is provided below, but is adapted to the infinite case as we will later need to consider limiting blocks.

Let $\{\varepsilon_1,\varepsilon_2,\varepsilon_3,\dots\}$ be a set of formal symbols, $p>2$ be a prime, and set
\[ \mathcal{X}=\bigoplus^\infty_{i=1}\mathbb{R}\varepsilon_i\]
We have an inner product on $\mathcal{X}$ given by extending linearly the relations
	\begin{equation}
		(\varepsilon_i,\varepsilon_j)=\delta_{ij}
		\label{eqn:innerprod}
	\end{equation}
(Here $\delta_{ij}$ is the Kronecker delta).\\
Let $\Phi=\{\pm(\varepsilon_i-\varepsilon_j),\pm(\varepsilon_i+\varepsilon_j):1\leq i<j\}$ be the infinite root system of type $D$, and $W$ the corresponding Weyl group, generated by the reflections $s_\alpha$ $(\alpha\in\Phi)$. There is an action of $W$ on $\mathcal{X}$, the generators acting by
	\[ s_\alpha(x) = x-(x,\alpha)\alpha \]
We may also define $W_p$ to be the corresponding affine Weyl group, generated by the reflections
\linebreak
  \mbox{$s_{\alpha,rp}$ $(\alpha \in \Phi,r\in\mathbb{Z})$}, with an action on $\mathcal{X}$ given by
	\[s_{\alpha,rp}(x)=x-((x,\alpha)-rp)\alpha\]
	
Fix the element
	\[\rho=\rho(\delta)=\left(-\frac{\delta}{2},-\frac{\delta}{2}-1,-\frac{\delta}{2}-2,-\frac{\delta}{2}-3,\dots\right)\]
We may then define a different (shifted) action of $W$ (resp. $W_p$) on $\mathcal{X}$ given by
	\[ w\cdot_\delta x=w(x+\rho(\delta))-\rho(\delta) \]
for all $w\in W$ (resp. $W_p$) and $x \in \mathcal{X}$.\\
		
Note that for any partition $\lambda=(\lambda_1,\lambda_2,\dots)$ there is a corresponding element $\sum\lambda_i\varepsilon_i\in \mathcal{X}$, where any $\lambda_i$ not appearing in $\lambda$ is taken to be zero. In this way we may consider partitions to be elements of $\mathcal{X}$, and write $\Lambda=\bigcup_{n\in\mathbb{N}}\Lambda_n$ for the set of all partitions.

Finally we define the transposed partition $\lambda^T$ of $\lambda$, corresponding to the Young diagram
	\[[\lambda^T]=\{(y,x)~|~(x,y)\in[\lambda]\}\]
	
From \cite{geom} we have the following results:
	
	\begin{thm}[{\cite[Theorem 4.2]{geom}}]
		Let $\lambda,\mu \in \Lambda_n$. Then the two $B_n^K(\delta)$-cell modules $\Delta_n^K(\lambda^T)$ and $\Delta^K_n(\mu^T)$ are in the same $K$-block if and only if $\mu\in W\cdot_\delta\lambda$.\label{thm:geomblocks}
	\end{thm}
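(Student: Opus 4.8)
The plan is to derive the geometric criterion from the combinatorial block description of \cite{blocks}, by encoding partitions as points of $\mathcal{X}$ and identifying the block-linkage relation with the orbit relation for the dot action of $W$. To $\lambda=(\lambda_1,\lambda_2,\dots)$ I attach the point $\lambda+\rho(\delta)$, whose $i$-th coordinate is the $\delta$-shifted beta-number $\beta_i=\lambda_i-(i-1)-\delta/2$; this sequence is strictly decreasing and agrees with the staircase $-(i-1)-\delta/2$ for $i\gg 0$. Under this encoding $w\cdot_\delta\lambda=w(\lambda+\rho)-\rho$ is simply the ordinary $W$-action on the beta-numbers, so $\mu\in W\cdot_\delta\lambda$ exactly when $(\beta_i(\mu))$ is a type-$D$ signed permutation of $(\beta_i(\lambda))$. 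The generators act by $s_{\varepsilon_i-\varepsilon_j}\colon \beta_i\leftrightarrow\beta_j$ (size-preserving) and $s_{\varepsilon_i+\varepsilon_j}\colon(\beta_i,\beta_j)\mapsto(-\beta_j,-\beta_i)$ (changing $|\lambda|$ by an even amount, the move that relates cell modules indexed by partitions of different sizes).

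Next I would recall from \cite{blocks} that two partitions lie in the same $K$-block of $B_n^K(\delta)$ iff they are joined by a chain of adjacent-level linkages, each adding or removing a pair of boxes in two distinct rows whose contents are matched by $\delta$. The crux identity is that such a move is precisely an application of $s_{\varepsilon_i+\varepsilon_j}$: adding a box to each of rows $i$ and $j$ sends $(\beta_i,\beta_j)\mapsto(\beta_i+1,\beta_j+1)$, and the content-matching condition of \cite{blocks} translates, in this encoding, to exactly $\beta_i+\beta_j=-1$; on that locus $-\beta_j=\beta_i+1$ and $-\beta_i=\beta_j+1$, so the translation coincides with the reflection $s_{\varepsilon_i+\varepsilon_j}$ of the dot action. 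Thus every elementary linkage is such a reflection landing on the adjacent level, and conversely, while the size-preserving reflections $s_{\varepsilon_i-\varepsilon_j}$ arise as compositions of two adjacent-level moves through a common neighbour. This already yields one inclusion: the block of $\lambda$ is contained in $(W\cdot_\delta\lambda)\cap\Lambda_n$.

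The reverse inclusion is where I expect the main obstacle to lie. Given a partition $\mu$ with $\mu+\rho=w(\lambda+\rho)$ for some $w\in W$, I must exhibit a chain of adjacent-level linkages, passing only through genuine partitions, that realises $w$; in other words the whole geometric orbit must be reachable by size-$\pm2$ steps that remain in $\Lambda_n$ at every stage, so that the orbit neither over- nor under-counts the block. This is exactly what the abacus and bead manipulations of Section 3 are designed to accomplish: one factorises an arbitrary type-$D$ signed permutation of the beta-numbers into the special reflections above, arranging that every intermediate bead configuration is again a partition. Bound up with this is the reason the controlling group is type $D$ rather than $B$ or $C$: a linkage requires two \emph{distinct} rows, so the move corresponding to a single sign change $s_{\varepsilon_i}$ (reflection in the wall $\beta_i=0$, e.g.\ adding two boxes to one row when $\beta_i=-1$) never links blocks; only even numbers of sign changes occur, and the governing walls are $\beta_i\pm\beta_j=0$, never $\beta_i=0$. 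Finally, the transpose in the statement reconciles the Graham--Lehrer cell-module labelling with the labelling natural for the dot action, corresponding to the negation of contents under transposition together with the chosen form of $\rho(\delta)$.
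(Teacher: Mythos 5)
You should first note that the paper contains no proof of this statement: it is imported verbatim as \cite[Theorem 4.2]{geom} and used as an input to the later arguments, so there is nothing internal to compare your argument against. Judged on its own terms, your sketch does isolate the correct central computation: adding a box to the ends of rows $i$ and $j$ sends $(\beta_i,\beta_j)\mapsto(\beta_i+1,\beta_j+1)$, and the content condition of \cite{blocks} (after the transpose absorbs a sign) is exactly the locus $\beta_i+\beta_j=-1$ on which this translation coincides with $s_{\varepsilon_i+\varepsilon_j}$. This is the same identity that powers the paper's own Lemma \ref{lem:moveweyl} and Proposition \ref{prop:maximal}, so the ``easy'' inclusion (block $\subseteq$ orbit) is essentially in order, modulo the fact that the block theorem of \cite{blocks} is not literally stated as a chain of adjacent-level content-matched moves but as the $\delta$-balanced condition of Definition \ref{def:balanced}; converting between the two, and in particular accounting for the parity clause (ii) when $\delta$ is even (which is what actually pins down type $D$ rather than $B$/$C$, beyond your heuristic that two distinct rows are needed), is part of the work you have not done.

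The genuine gap is the reverse inclusion, which you explicitly defer to ``the abacus and bead manipulations of Section 3.'' That machinery cannot be borrowed for this purpose: it is built on an abacus with $p$ runners and realises orbits of the \emph{affine} group $W_p$ in positive characteristic, and even there it only yields membership in the limiting block $\mathcal{B}^k(\lambda^T)$, because the intermediate partitions produced by the moves $a^r_{(i,j)}$ and $d^r_{(i,j)}$ may have size exceeding $n$ (see the Remark following Corollary \ref{cor:red}). Theorem \ref{thm:geomblocks} is a statement at a \emph{fixed} $n$ over $K$, so you must produce, for arbitrary $w\in W$ with $w\cdot_\delta\lambda=\mu\in\Lambda_n$, a chain of linkages every term of which is a genuine partition lying in $\Lambda_n$ — including justifying your unproved assertion that the size-preserving reflections $s_{\varepsilon_i-\varepsilon_j}$ can always be factored through a common neighbour inside $\Lambda_n$. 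This is exactly where the difficulty of the theorem sits (and where the analogous statement fails at finite $n$ in characteristic $p$, per the Remark after Theorem \ref{thm:orbits}), and your proposal acknowledges it without supplying an argument.
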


and

	\begin{thm}[{\cite[Theorem 6.4]{geom}}]
		Let $\Char k=p>2$, and $\lambda,\mu \in \Lambda_n$. Then the two $B_n^k(\bar\delta)$-cell modules $\Delta^k_n(\lambda^T)$ and $\Delta^k_n(\mu^T)$ are in the same $k$-block only if $\mu\in W_p\cdot_{\bar\delta}\lambda$.
		\label{thm:orbits}
	\end{thm}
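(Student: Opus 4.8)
The statement is a necessary condition, so the goal is to show that each $k$-block is contained in a single $W_p$-orbit; equivalently, that the $k$-block relation refines to ``lying in the same $W_p\cdot_{\bar\delta}$-orbit''. My plan is to reduce this to a statement about single decomposition numbers and then to peel that apart into a characteristic-$0$ part, handled by Theorem \ref{thm:geomblocks}, and a genuinely modular part, handled by Nakayama's Conjecture (Theorem \ref{thm:nakayama}) read through the abacus. Since $B_n^k(\bar\delta)$ is cellular, block-equivalence of cell modules is the transitive closure of the relation ``share a common composition factor'', and the $W_p$-orbits partition the set of partitions; hence it suffices to prove the single implication
\[ [\Delta_n^k(\lambda^T):L_n^k(\nu^T)]\neq0 \ \Longrightarrow\ \nu\in W_p\cdot_{\bar\delta}\lambda. \]

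To analyse this decomposition number I would pass through the $p$-modular system. Because the cell modules have the $R$-forms $\Delta_n^R(\lambda)$, applying the decomposition map $d$ to the identity $[\Delta_n^K(\lambda^T)]=\sum_\sigma[\Delta_n^K(\lambda^T):L_n^K(\sigma^T)]\,[L_n^K(\sigma^T)]$ in the Grothendieck group yields
\[ [\Delta_n^k(\lambda^T):L_n^k(\nu^T)]=\sum_\sigma[\Delta_n^K(\lambda^T):L_n^K(\sigma^T)]\,d_{\sigma\nu}, \]
where $d_{\sigma\nu}$ is the multiplicity of $L_n^k(\nu^T)$ in the modular reduction of $L_n^K(\sigma^T)$. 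A nonzero left-hand side therefore supplies a partition $\sigma$ with both factors nonzero. The first factor puts $L_n^K(\sigma^T)$ in the same $K$-block as $\Delta_n^K(\lambda^T)$, so by Theorem \ref{thm:geomblocks} we get $\sigma\in W\cdot_\delta\lambda$; since the reflections generating $W$ are among those generating $W_p$ and $\rho(\delta)$ reduces to $\rho(\bar\delta)$ (recall $p>2$), this gives $\sigma\in W_p\cdot_{\bar\delta}\lambda$. The problem is thereby reduced to bounding the support of the Brauer decomposition matrix, namely to showing
\[ d_{\sigma\nu}\neq0\ \Longrightarrow\ \nu\in W_p\cdot_{\bar\delta}\sigma. \]

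For this last implication I would strip away the diagrammatic part using the tower of recollement. The localisation functor $F_n$ of \eqref{eq:fn} is exact, carries $\Delta_n^\bbf(\tau^T)$ to $\Delta_{n-2}^\bbf(\tau^T)$ and $L_n^\bbf(\tau^T)$ to $L_{n-2}^\bbf(\tau^T)$ whenever $\tau$ is not at the top level (and to $0$ otherwise), and commutes with modular reduction; applying it repeatedly reduces $d_{\sigma\nu}$ to the case in which one of $\sigma,\nu$ labels a module pulled back from the symmetric group quotient \eqref{eq:symmquotient}. There the linkage is governed by the decomposition theory of $kS_m$, whose support lies in one $p$-block, so by Theorem \ref{thm:nakayama} the relevant partitions share a $p$-core and $p$-weight. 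Finally I would convert this into geometry with the abacus of Section 3: encoding a partition through bead positions that record $\lambda+\rho(\bar\delta)$, the removal or addition of a rim $p$-hook is exactly a slide of a single bead by $p$ along a runner, which in the shifted coordinates is a generating affine reflection $s_{\alpha,rp}$ of $W_p$. Thus equal $p$-core and $p$-weight forces membership in a common $W_p\cdot_{\bar\delta}$-orbit, completing the chain $\lambda\sim_{W_p}\sigma\sim_{W_p}\nu$.

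The main obstacle is this last implication, $d_{\sigma\nu}\neq0\Rightarrow\nu\in W_p\cdot_{\bar\delta}\sigma$: one must genuinely control which modular simples occur in the reduction of a $K$-simple of the Brauer algebra, and in particular the ``mixed'' situation in which a top-level symmetric-group simple appears as a composition factor of a deeper cell module, where the linkage is not purely that of $kS_m$. Showing that the characteristic-$0$ type-$D$ linkage of Theorem \ref{thm:geomblocks} and the modular type-$A$ linkage of Theorem \ref{thm:nakayama} together never leave a single $W_p$-orbit --- and checking that the $\rho(\bar\delta)$-shift and the transpose correctly intertwine rim $p$-hook moves with the reflections $s_{\alpha,rp}$ --- is where the real work, and the delicate abacus bookkeeping, lies.
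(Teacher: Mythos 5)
First, a point of comparison: the paper does not prove this statement at all --- it is Theorem 6.4 of \cite{geom}, imported as a black box --- so there is no internal proof to measure you against; your attempt has to stand on its own. As it stands it does not. Your reductions are fine as far as they go: passing to a single composition factor, expanding $[\Delta_n^k(\lambda^T):L_n^k(\nu^T)]$ through the decomposition map, and disposing of the factor $[\Delta_n^K(\lambda^T):L_n^K(\sigma^T)]$ via Theorem \ref{thm:geomblocks} (together with the observation that $W\cdot_\delta$ refines to $W_p\cdot_{\bar\delta}$ on integral points) are all legitimate. But the residual claim $d_{\sigma\nu}\neq0\Rightarrow\nu\in W_p\cdot_{\bar\delta}\sigma$, which you yourself flag as ``the main obstacle'', is not a detail to be tidied up later: it is essentially equivalent to the theorem. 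Indeed, $d_{\sigma\nu}\neq0$ forces $L_n^k(\nu^T)$ to be a composition factor of $\Delta_n^k(\sigma^T)$, so bounding the support of the matrix $(d_{\sigma\nu})$ by $W_p$-orbits already contains the block statement you set out to prove; your argument has reformulated the problem rather than reduced it.

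The specific mechanism you propose for closing this gap also fails. Localisation does reduce to $B_m^k$ with $\nu\vdash m$ at the top level, and if $\sigma\vdash m$ as well then $d_{\sigma\nu}$ is a symmetric-group decomposition number and Nakayama's Conjecture applies. But when $|\sigma|<|\nu|$ the simple $L_m^K(\sigma^T)$ is a genuinely diagrammatic module, not one pulled back through \eqref{eq:symmquotient}, and neither Theorem \ref{thm:nakayama} nor Theorem \ref{thm:geomblocks} says anything about which top-level simples occur in its $p$-modular reduction --- every way of invoking them (e.g.\ via $\Delta_m^k(\sigma^T)$ sharing a factor with $\Delta_m^k(\nu^T)$) lands you back at the block relation you are trying to control. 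This mixed case is exactly where the content of the theorem lives, and it needs an input you have not supplied: the proof in \cite{geom} obtains the orbit bound by a different route (analysing invariants of cell modules that are constant on blocks and reduce, mod $p$, to the data defining the $W_p$-orbits), rather than by chasing individual decomposition numbers.
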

	
	\begin{rem*}
		It is important to note that the converse of Theorem \ref{thm:orbits} is not true in general. In fact, it it shown in \cite[Section 7]{geom} that counter-examples exist for arbitrarily large values of $n$.
	\end{rem*}

The next result from \cite[Section 3]{hartmannpaget} allows us to use results known for the symmetric group algebras when dealing with the Brauer algebra.

	\begin{thm}[{\cite[Proposition 3.1]{hartmannpaget}}]
		Let $\lambda,\mu\vdash n-2t$ be partitions. Then 
			\[\mathrm{Hom}_{B_n^k(\bar\delta)}(\Delta_n^k(\lambda),\Delta_n^k(\mu))\cong\mathrm{Hom}_{kS_{n-2t}}(S^\lambda_k,S^\mu_k)\]
		In particular, given two partitions $\lambda,\mu\vdash n-2t$, the two $B_n^k(\bar\delta)$-cell modules $\Delta_n^k(\lambda)$ and $\Delta_n^k(\mu)$ are in the same $k$-block if the two Specht modules $S^\lambda_k$ and $S^\mu_k$ are in the same block over the symmetric group algebra $kS_{n-2t}$.
		\label{thm:symblocks}
	\end{thm}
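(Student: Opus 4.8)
The plan is to peel the Brauer cell modules down to Specht modules in two stages: first globalisation, then inflation. Since $\lambda,\mu\vdash n-2t$ we have $\Delta_n^k(\lambda)\cong G\,\Delta_{n-2t}^k(\lambda)$ and $\Delta_n^k(\mu)\cong G\,\Delta_{n-2t}^k(\mu)$, where $G=G_{n-2}\circ G_{n-4}\circ\cdots\circ G_{n-2t}$. Each $G_m$ is a full embedding, hence fully faithful, and a composite of fully faithful functors is fully faithful; therefore
\[
\mathrm{Hom}_{B_n^k(\bar\delta)}(\Delta_n^k(\lambda),\Delta_n^k(\mu))\cong\mathrm{Hom}_{B_{n-2t}^k(\bar\delta)}(\Delta_{n-2t}^k(\lambda),\Delta_{n-2t}^k(\mu)).
\]

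For the second stage, set $m=n-2t$. As both $\lambda$ and $\mu$ are partitions of $m$, the cell modules $\Delta_m^k(\lambda)$ and $\Delta_m^k(\mu)$ are the inflations of the Specht modules $S_k^\lambda$ and $S_k^\mu$ along the quotient $B_m^k(\bar\delta)\twoheadrightarrow B_m^k(\bar\delta)/B_m^k(\bar\delta)e_mB_m^k(\bar\delta)\cong kS_m$ of \eqref{eq:symmquotient}; in particular the two-sided ideal $J=B_m^k(\bar\delta)e_mB_m^k(\bar\delta)$ annihilates both. Whenever a two-sided ideal $J$ of an algebra $A$ kills two $A$-modules, every $A$-linear map between them is automatically $A/J$-linear, and conversely, so $\mathrm{Hom}_{B_m^k(\bar\delta)}(\Delta_m^k(\lambda),\Delta_m^k(\mu))\cong\mathrm{Hom}_{kS_m}(S_k^\lambda,S_k^\mu)$. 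Composing the two isomorphisms yields the stated identity.

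For the block statement I would not argue directly from the $\mathrm{Hom}$-space, since two Specht modules in a common block need not admit a non-zero homomorphism; instead I would transfer composition factors. The block relation on cell-module labels is the transitive closure of ``$L(\kappa)$ is a composition factor of $\Delta(\nu)$'', and likewise for the symmetric group with the $S_k^\nu$ and simples $D_k^\kappa$. Iterating the exact localisation functor $F$, and using that $F_m\Delta_m^k(\nu)=\Delta_{m-2}^k(\nu)$ and $F_mL_m^k(\kappa)=L_{m-2}^k(\kappa)$ for $|\nu|,|\kappa|\leq m-2$, gives $[\Delta_n^k(\nu):L_n^k(\kappa)]=[\Delta_{n-2t}^k(\nu):L_{n-2t}^k(\kappa)]=[S_k^\nu:D_k^\kappa]$ for every $\kappa\vdash n-2t$. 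Thus every symmetric-group linkage $[S_k^\nu:D_k^\kappa]\neq0$ is also a Brauer linkage $[\Delta_n^k(\nu):L_n^k(\kappa)]\neq0$, so partitions of $n-2t$ lying in a common $kS_{n-2t}$-block lie in a common $B_n^k(\bar\delta)$-block, as required. The main obstacle is precisely this last transfer: one must pin down the behaviour of the localisation (and globalisation) functors on simple and cell modules in the tower of recollement, so that the symmetric-group decomposition numbers reappear unchanged as Brauer decomposition numbers for labels of the same size.
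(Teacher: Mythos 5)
Your proof is correct. The paper gives no proof of this statement (it is quoted directly from \cite{hartmannpaget}), and your two-stage reduction --- full faithfulness of the composite globalisation functor, followed by inflation along the quotient $B_{n-2t}^k(\bar\delta)/B_{n-2t}^k(\bar\delta)e_{n-2t}B_{n-2t}^k(\bar\delta)\cong kS_{n-2t}$ --- is essentially the argument of the cited source. Your handling of the block statement is also right: you correctly avoid arguing from the Hom-space and instead transfer decomposition numbers through the exact localisation functor, and the facts you flag as the ``main obstacle'' ($F_m\Delta_m^k(\nu)\cong\Delta_{m-2}^k(\nu)$ and $F_mL_m^k(\kappa)\cong L_{m-2}^k(\kappa)$ for labels of size at most $m-2$, together with the cellular-algebra characterisation of blocks by linkage of composition factors) are standard features of the idempotent-localisation and tower-of-recollement setup already invoked in Section 2, so there is no genuine gap.
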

	
In what follows we will set $R$ to be the $p$-adic integers and identify its value group with $\mathbb{Z}$. In characteristic 0, we have from \cite{wenzl} that $B_n(\delta)$ is semisimple if $\delta\notin\mathbb{Z}$, so we will henceforth assume that $\delta\in\mathbb{Z}$. In positive characteristic, we will similarly follow the results of \cite[Section 6]{geom} and assume that $\delta$ is an element of the prime subfield $\mathbb{Z}_p\subset k$. In order to make use of Theorem \ref{thm:orbits} we will only concern ourselves with values of $p>2$.

\section{Abacus notation}

To each partition we can associate an abacus diagram, consisting of $p$ columns, known as runners, and a configuration of beads across these. By convention we label the runners left to right, starting with 0, and the positions on the abacus are also numbered from left to right, working down from the top row, starting with $0$. Given a partition $\lambda=(\lambda_1,\dots,\lambda_l)\vdash m$, fix a positive integer $b\geq m$ and construct the $\beta$-sequence of $\lambda$, defined to be
	\[\beta_\lambda=(\lambda_1-1+b,\lambda_2-2+b,\dots,\lambda_l-l+b,-(l+1)+b,\dots2,1,0)\]
Then place a bead on the abacus in each position given by $\beta_\lambda$, so that there are a total of $b$ beads across the runners. Note then that for a fixed value of $b$, the abacus is uniquely determined by $\lambda$, and any such abacus arrangement corresponds to a partition simply by reversing the above. Here is an example of such a construction.

	\begin{example}
		In this example we will fix the values $p=5,m=9,b=10$ and represent the partition $\lambda=(5,4)$ on the abacus. Following the above process, we first calculate the $\beta$-sequence of $\lambda$:
			\begin{eqnarray*}
				\beta_\lambda	&=&(5-1+10,~4-2+10,\;-3+10,\;-4+10,\dots,\;-9+10,\;-10+10)\\
							&=&(14,12,7,6,5,4,3,2,1,0)
			\end{eqnarray*}
		The next step is to place beads on the abacus in the corresponding positions. We also number the beads, so that bead $1$ occupies position $\lambda_1-1+b$, bead $2$ occupies position $\lambda_2-2+b$ and so on. The labelled spaces and the final abacus are shown below.
			\begin{figure}[H]
				\centering
				\includegraphics[scale=0.6]{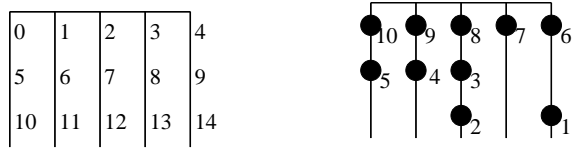}
				\caption{The positions on the abacus with 5 runners, and the arrangement of beads (numbered) representing $\lambda=(5,4)$}
			\end{figure}
		\label{ex:abacus}
	\end{example}
After fixing values of $p$ and $b$, we will abuse notation and write $\lambda$ for both the partition and the corresponding abacus with $p$ runners and $b$ beads.\\
	
We wish to investigate the effect moving the beads on the abacus has on the partitions being represented. One such result given in \cite[Chapter 2.7]{jameskerber} is as follows: Sliding a bead down (resp. up) one space on its runner corresponds to adding (resp. removing) a rim $p$-hook to the Young diagram of the partition. Therefore by sliding all beads up their runners as far as they will go, we remove all rim $p$-hooks and arrive at the $p$-core. Therefore two partitions have the same $p$-core if and only if the number of beads on corresponding runners of the two abaci is the same. Therefore as in \cite[Chapter 2.7]{jameskerber} we may re-state Nakayama's conjecture (Theorem \ref{thm:nakayama}) as:
 
	\begin{thm*}
		Two partitions $\lambda, \mu\vdash n$ label Specht modules in the same $k$-block for the symmetric group algebra $kS_n$ if and only if when represented on an abacus with $p$ runners and $b$ beads, the number of beads on corresponding runners is the same.
	\end{thm*}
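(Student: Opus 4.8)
The plan is to deduce the statement directly from the original formulation of Nakayama's conjecture (Theorem~\ref{thm:nakayama}) together with the two abacus facts just recalled: that sliding a bead up one space on its runner removes a rim $p$-hook, and hence that two partitions share a $p$-core precisely when their abaci (for a fixed $p$ and $b$) carry the same number of beads on each runner. The only genuine point to check is that, under the standing hypothesis $\lambda,\mu\vdash n$, the $p$-weight condition appearing in Theorem~\ref{thm:nakayama} becomes redundant once the $p$-core condition is imposed.

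First I would record the size identity $|\lambda| = |\mathrm{core}_p(\lambda)| + p\cdot w_p(\lambda)$, where $\mathrm{core}_p(\lambda)$ and $w_p(\lambda)$ denote the $p$-core and the $p$-weight of $\lambda$. This is immediate from the construction of the $p$-core: removing a single rim $p$-hook deletes exactly $p$ nodes, and the $p$-weight simply counts the number of such removals performed to reach the core.

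Next, for the forward implication, suppose $\lambda$ and $\mu$ lie in the same $k$-block of $kS_n$. By Theorem~\ref{thm:nakayama} they then have equal $p$-cores, and by the abacus fact recalled above this is equivalent to their abaci having the same number of beads on each runner. For the reverse implication, suppose the bead counts agree runner by runner. Then $\mathrm{core}_p(\lambda) = \mathrm{core}_p(\mu)$, so in particular $|\mathrm{core}_p(\lambda)| = |\mathrm{core}_p(\mu)|$. Since $|\lambda| = |\mu| = n$, the size identity forces $w_p(\lambda) = w_p(\mu)$. Hence $\lambda$ and $\mu$ share both their $p$-core and their $p$-weight, and Theorem~\ref{thm:nakayama} places them in the same block.

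I do not anticipate a serious obstacle: all of the representation-theoretic content is imported wholesale from Nakayama's conjecture, and the translation between the abacus and the $p$-core has already been established in the preceding discussion. The one step that must not be glossed over is the passage from equal $p$-cores to equal $p$-weights, which would fail outright for partitions of different sizes; it is exactly the common restriction $\lambda,\mu\vdash n$ that makes this inference valid.
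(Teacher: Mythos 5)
Your proof is correct and follows essentially the same route as the paper, which likewise derives the statement by combining the bead-sliding/rim-hook correspondence (equal bead counts per runner $\Leftrightarrow$ equal $p$-cores) with Theorem~\ref{thm:nakayama}. The only difference is that you make explicit the step the paper leaves implicit, namely that for $\lambda,\mu\vdash n$ the identity $|\lambda|=|\mathrm{core}_p(\lambda)|+p\cdot w_p(\lambda)$ forces equal $p$-weights once the $p$-cores agree; this is a worthwhile clarification but not a different argument.
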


Combining this with Theorem \ref{thm:symblocks} we deduce that $\lambda$ and $\mu$, partitions of $n-2t$, are in the same $B_n^k(\bar\delta)$-block if it is possible to reach one configuration of beads from the other by a sequence of moves that takes two beads, slides one $r$ places down its runner and slides the other $r$ places up. We will use the notation
	\[ \mu=a_{(i,j)}^r(\lambda)\]
to indicate that the abacus representing the partition $\mu$ is obtained from that representing $\lambda$ by sliding bead $i$ down by $r$ spaces and bead $j$ up by $r$ spaces.

\begin{rem}
Note that this notation will only be well defined after fixing the number of runners $p$ and beads $b$, which will be the case in all subsequent uses.\label{rem:arij}
\end{rem}

Following \cite[Section 7]{geom}, we now impose the condition that the number of beads $b$ used also satisfies
	\begin{equation}
		2b\equiv 2-\delta\text{ (mod $p$)}
		\label{eqn:beads}
	\end{equation}
so that $\lambda$ and $\mu$ are in the same $W_p$-orbit if and only if when using a total of $b$ beads, the number of beads on runner 0 and sum of the number of beads on runners $t$ and $p-t$ ($t>0$) is the same on both the abaci of $\lambda$ and $\mu$.
This pairing of runners suggests an alternative way of viewing the abacus.

Given a partition $\lambda$, we construct the usual abacus as above. However we then link runners $t$ and $p-t$ for each $0<t\leq \frac{p-1}{2}$ via an arc above the diagram. See Figure \ref{fig:abacus1} for an example.

	\begin{figure}[H]
		\centering
		\includegraphics[scale=0.6]{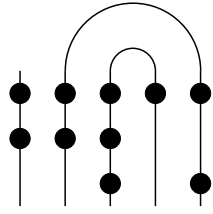}
		\caption{The partition $(5,4)$ on the new abacus}
		\label{fig:abacus1}
	\end{figure}
	
Along with sliding beads up and down runners, we can now slide them up and over the top arc. This allows us to introduce the following move: choose two beads, and slide them both $r$ places up their runners, over the arc and back down their respective paired runners. Figure \ref{fig:abacus2} shows some examples of such a move. If one of the beads lies on runner 0, that bead simply moves up and back down the runner, visiting position zero just once.\\
Note that beads may both move from left to right, right to left, or one may move left and the other right. There is also no restriction on which two beads we move, provided that they move the same number of spaces and end in an unoccupied position. Figure \ref{fig:abacus2} shows some examples of this move.

	\begin{figure}[H]
		\centering
		\includegraphics[scale=0.6]{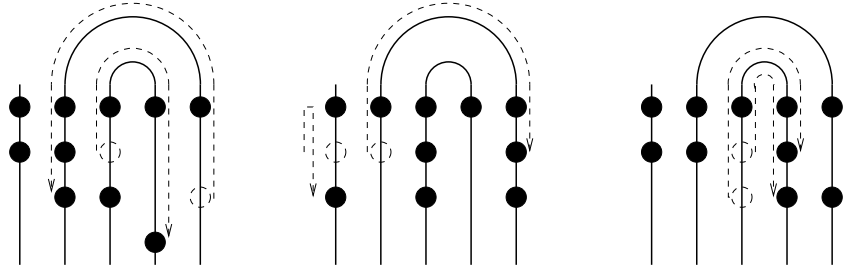}
		\caption{Moving beads on the abacus given in Figure \ref{fig:abacus1}}
		\label{fig:abacus2}
	\end{figure}

We will use the notation
	\[\mu=d^r_{(i,j)}(\lambda)\]
to indicate that the abacus representing the partition $\mu$ is obtained from that representing $\lambda$ by sliding beads $i$ and $j$ both by $r$ spaces up and over the arc to their paired runner (or up and down if on runner 0). 

Note that the comments in Remark \ref{rem:arij} also apply here.

The next section shows how this move relates the partitions as labels for cell modules.

\section{The blocks of the Brauer algebra}

Recall the localisation and globalisation functors, $F_n$ and $G_n$ respectively, and the indexing set $\Lambda_n$ of $B_n^\bbf(\delta)$ cell modules (see \eqref{eq:fn}, \eqref{eq:gn} \& \eqref{eq:indexset}). The functors give us a full embedding of $B_n^\bbf(\delta)$-\catmod \;inside $B_{n+2}^\bbf(\delta)$-\catmod, and hence an embedding of $\Lambda_n$ inside $\Lambda_{n+2}$. Fix the value of the parameter $\delta$ and let $\mathcal{B}_n^\bbf(\lambda)$ denote the set of partitions labelling cell modules in the $B_n^\bbf(\delta)$-block containing $\Delta_n^\bbf(\lambda)$. We can then embed $\mathcal{B}_n^\bbf(\lambda)$ inside $\mathcal{B}_{n+2}^\bbf(\lambda)$, and consider the limiting set
	\[ \mathcal{B}^\bbf(\lambda)=\bigcup_{\substack{n\in\mathbb{N}\\n\equiv|\lambda|\text{ mod }2}}\mathcal{B}_n^\bbf(\lambda)\subset\Lambda \]
Recall the result from \cite{geom} that the orbits of the affine Weyl group of type $D$ on the set of partitions correspond to unions of blocks of the Brauer algebra. Our aim is to use the moves $a_{(i,j)}^r$ and $d_{(i,j)}^r$ to show that in the limiting case, the orbits and blocks are equal.

We begin by proving the following general result:

	\begin{lemma}
		Suppose $X,Y$ are $R$-free $A$-modules of finite rank and let $M\subseteq KY$. If $\mathrm{Hom}_{K}(KX,KY/M)\neq0$ then there is a submodule $N\subseteq kY$ such that $\mathrm{Hom}_{k}(kX,kY/N)\neq0$. Moreover, $N$ is the $p$-modular reduction of a lattice in $M$.
		\label{lem:modred}
	\end{lemma}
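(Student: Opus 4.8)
The plan is to produce the required homomorphism over $k$ by rescaling a given nonzero homomorphism over $K$ and then reducing it modulo $\pi$. Fix a nonzero $\phi\in\mathrm{Hom}_K(KX,KY/M)$ and write $p:KY\to KY/M$ for the canonical projection; since $p$ is a $KA$-map and $Y,M$ are $A$-stable, everything below is compatible with the $A$-action. The natural candidate for the lattice in $M$ is $L:=M\cap Y$. First I would record two routine facts about lattices in the $K$-space $KY$: because $Y$ spans $KY$ over $K$, clearing denominators gives $KL=M$, so $L$ really is a lattice in $M$; and because $M$ is a $K$-subspace, the quotient $Y/L\cong p(Y)$ is $\pi$-torsion-free, hence free over the DVR $R$, so $p(Y)$ is a lattice (and an $A$-submodule) in $KY/M$.

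The key step is a normalization of $\phi$ by a power of $\pi$. Since $X$ has finite rank, $\phi(X)$ is a finitely generated $R$-submodule of $KY/M=K\cdot p(Y)$, so there is a unique integer $c$ with $\pi^{c}\phi(X)\subseteq p(Y)$ but $\pi^{c}\phi(X)\not\subseteq\pi\,p(Y)$ (take $c$ minimal with the first inclusion, and check minimality forces the second). Replacing $\phi$ by $\pi^{c}\phi$, which is still a nonzero $KA$-homomorphism, I may assume $\phi(X)\subseteq p(Y)$ and $\phi(X)\not\subseteq\pi\,p(Y)$. Restricting then yields an honest $A$-module map of $R$-modules $\phi|_X:X\to p(Y)$.

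Finally I would apply the right-exact functor $k\otimes_R-$. From the short exact sequence $0\to L\to Y\to p(Y)\to 0$ of $A$-modules one gets $k\otimes_R p(Y)\cong kY/N$, where $N$ is the image of $L$ (equivalently of $k\otimes_R L$) in $kY$, which is precisely the $p$-modular reduction of the lattice $L$ in $M$. The induced map $\bar\phi:=k\otimes_R(\phi|_X):kX\to k\otimes_R p(Y)\cong kY/N$ is the desired homomorphism, and it is nonzero: vanishing of $\bar\phi$ would mean $\phi(X)\subseteq\pi\,p(Y)$, contradicting the normalization. Hence $\mathrm{Hom}_k(kX,kY/N)\neq0$ with $N$ of the stated form.

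I expect the main obstacle to be exactly this rescaling/nonvanishing step: one must choose the power of $\pi$ so that the image of $X$ lands inside the lattice $p(Y)$ yet survives reduction modulo $\pi$, and then see that nonvanishing of $\bar\phi$ is captured precisely by $\phi(X)\not\subseteq\pi\,p(Y)$. By comparison the supporting facts — that $M\cap Y$ is a lattice in $M$ and that $p(Y)$ is $R$-free — are standard torsion-free arguments over a DVR, and the isomorphism $k\otimes_R p(Y)\cong kY/N$ is just right-exactness of base change; the only care needed there is to track that $N$ is indeed the reduction of $M\cap Y$ realised inside $kY$.
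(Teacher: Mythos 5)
Your proposal is correct and follows essentially the same route as the paper's proof: rescale the nonzero homomorphism by a power of $\pi$ so its image lands in the lattice $p(Y)$ but not in $\pi\,p(Y)$, then reduce modulo $\pi$, identifying the lattice in $M$ as $L=M\cap Y=\ker(Y\to p(Y))$ and using freeness of $p(Y)$ over the DVR to see that $k\otimes_R p(Y)\cong kY/kL$. The only cosmetic difference is that the paper phrases the last step via the splitting of $0\to L\to Y\to p(Y)\to 0$ rather than right-exactness of base change plus vanishing of the relevant Tor, but these are the same observation.
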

	
		\begin{proof}
			Let \mbox{$Q=KY/M$} be the image of the canonical quotient map
			\mbox{$\rho:KY\rightarrow KY/M$}, and let 
			\linebreak
			$f\in\mathrm{Hom}_{K}(KX,Q)$ be non-zero. Note that $\rho(Y)$ is a lattice in $Q$, since $Y$ has finite rank and
			\linebreak 
			\mbox{$K\rho(Y)=\rho(KY)=Q$}. As $X$ and $Y$ are modules of finite rank we may assume that 
				\begin{equation}
					f(X)\subseteq \rho(Y)\text{~~~but~~~}f(X)\nsubseteq \pi \rho(Y)
					\label{eqn:pires}
				\end{equation}
for instance by considering the matrix of $f$ and multiplying the coefficients by an appropriate power of $\pi$.\\
Then $f$ restricts to a homomorphism $X\rightarrow\rho(Y)$, and induces a homomorphism $\overline{f}:kX\rightarrow k\rho(Y)$. This must be non-zero since we can find $x\in X$ such that $f(x)\in \rho(Y)\backslash\pi \rho(Y)$ by \eqref{eqn:pires}.\\

			It remains to prove that $k\rho(Y)$ can be taken to be $kY/N$ for some $N\subset kY$, the modular reduction of a lattice in $M$. We have the following maps:

				\begin{figure}[H]
					\[\xymatrix{0\ar[r]&M\ar[r]&KY\ar@{->>}[r]^\rho&Q&KX\ar[l]_f\\
	        				 0\ar[r]&L\ar[r]&Y\ar@{->>}[r]\ar@{}[u]|\cup\ar@{->>}[d]&\rho(Y)\ar@{}[u]|\cup\ar@{->>}[d]&X\ar@{}[u]|\cup\ar@{->>}[d]\\
	       				  &&kY\ar@{->>}[r]&k\rho(Y)&kX\ar[l]_<<<<<{\overline{f}}}\]
				\end{figure}
			where $L=\mathrm{Ker}(Y\longrightarrow\rho(Y))$\\
			\\
			The $K$-module $Q$ is torsion free. Therefore as an $R$-module, $\rho(Y)\subseteq Q$ must also be torsion free. Since $R$ is a principal ideal domain (by definition of it being a discrete valuation ring), the structure theorem for modules over a PID tells us that $\rho(Y)$ must be free. It is therefore projective, and the exact sequence
				\[0\longrightarrow L\longrightarrow Y\longrightarrow \rho(Y)\longrightarrow0\]
			is split.
			Then since the functors $K\otimes_R-$ and $k\otimes_R-$ preserve split exact sequences, we deduce that $M\cong KL$ and we can set $N=kL$ to complete the exact sequence
				\[0\longrightarrow N\longrightarrow kY\longrightarrow k\rho(Y)\longrightarrow0\]
satisfying the requirements above.
		\end{proof}
		
Using this, we may then prove:

	\begin{propn}
		If $\mu\in\mathcal{B}^K_n(\lambda)$, then $\mu\in\mathcal{B}^k_n(\lambda)$.
		\label{prop:blocks}
	\end{propn}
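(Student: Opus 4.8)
The plan is to reduce the statement to repeated use of Lemma \ref{lem:modred}, exploiting the fact that block membership is the transitive closure of the relation ``two cell modules share a composition factor''. The key structural input is that each cell module $\Delta_n^\bbf(\nu)$ has simple head $L_n^\bbf(\nu)$ and is therefore indecomposable, so all of its composition factors lie in a single block. Combined with the standard relation $C=D^{T}D$ between the Cartan matrix $C$ and the decomposition matrix $D$ of a cellular algebra (blocks being the connected components of the Cartan graph), this shows that $\alpha$ and $\beta$ lie in the same $\bbf$-block exactly when they are joined by a chain of cell modules in which consecutive terms share a composition factor. Since $\mu\in\mathcal{B}_n^K(\lambda)$, I would fix such a chain over $K$, say $\lambda=\sigma_0,\sigma_1,\dots,\sigma_s=\mu$ with $\Delta_n^K(\sigma_i)$ and $\Delta_n^K(\sigma_{i+1})$ sharing a composition factor $L_n^K(\nu_i)$. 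Because the $k$-block relation is itself transitive, it then suffices to establish the single step: if $L_n^K(\nu)$ is a composition factor of $\Delta_n^K(\sigma)$, then $\sigma$ and $\nu$ lie in the same $k$-block. Applying this to $(\sigma_i,\nu_i)$ and to $(\sigma_{i+1},\nu_i)$ for each $i$ links consecutive labels in the chain over $k$, and hence links $\lambda$ to $\mu$.

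To carry out the single step I would manufacture a homomorphism to which Lemma \ref{lem:modred} applies. Starting from the hypothesis $[\Delta_n^K(\sigma):L_n^K(\nu)]\neq0$, choose a submodule $M\subseteq\Delta_n^K(\sigma)$ maximal subject to $L_n^K(\nu)$ remaining a composition factor of the quotient; a routine maximality argument shows that $L_n^K(\nu)$ then embeds in the socle of $\Delta_n^K(\sigma)/M$. Composing the canonical surjection $\Delta_n^K(\nu)\twoheadrightarrow L_n^K(\nu)$ with this embedding produces a nonzero element of $\mathrm{Hom}_K(\Delta_n^K(\nu),\Delta_n^K(\sigma)/M)$. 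Now set $X=\Delta_n^R(\nu)$ and $Y=\Delta_n^R(\sigma)$, which are $R$-free of finite rank and satisfy $KX=\Delta_n^K(\nu)$, $KY=\Delta_n^K(\sigma)$, $kX=\Delta_n^k(\nu)$ and $kY=\Delta_n^k(\sigma)$. Lemma \ref{lem:modred} then yields a submodule $N\subseteq kY=\Delta_n^k(\sigma)$ with $\mathrm{Hom}_k(\Delta_n^k(\nu),\Delta_n^k(\sigma)/N)\neq0$.

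Finally I would translate this nonzero map back into a block statement: its image is simultaneously a nonzero quotient of $\Delta_n^k(\nu)$ and a submodule of $\Delta_n^k(\sigma)/N$, so every composition factor of the image is common to $\Delta_n^k(\nu)$ and $\Delta_n^k(\sigma)$. Thus these two $k$-cell modules share a composition factor, so $\nu$ and $\sigma$ lie in the same $k$-block, which completes the single step and hence the proof.

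I expect the main obstacle to be not the modular reduction itself, which is exactly the content of Lemma \ref{lem:modred}, but the purely characteristic-zero bookkeeping that sets it up: converting the multiplicity datum $[\Delta_n^K(\sigma):L_n^K(\nu)]\neq0$ into an honest homomorphism into a \emph{quotient} of $\Delta_n^K(\sigma)$ via the socle argument, and verifying that the cellular $R$-forms $\Delta_n^R(\nu)$ and $\Delta_n^R(\sigma)$ genuinely satisfy the freeness and base-change hypotheses so that extension of scalars recovers the cell modules over $K$ and over $k$. One should also keep track of the fact that $B_n^K(\delta)$ need not be semisimple for integral $\delta$, which is precisely what makes the composition series of $\Delta_n^K(\sigma)$ nontrivial and the statement non-vacuous.
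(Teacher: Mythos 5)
Your proof is correct and follows essentially the same route as the paper: both reduce block membership over $K$ to a chain of nonzero homomorphisms from one cell module into a quotient of another and push each link through Lemma \ref{lem:modred}, the only difference being that the paper cites the Graham--Lehrer characterisation of blocks by such homomorphisms directly, whereas you reconstruct them from shared composition factors via the maximal-submodule/socle argument. One small imprecision: over $k$ the cell modules labelled by $p$-singular partitions need not have simple heads, so in your final step you should invoke the general cellular-algebra fact that every cell module lies in a single block, rather than indecomposability via simple heads.
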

	
		\begin{proof}
			By the cellularity of $B_n^K(\delta)$, as detailed in \cite{grahamlehrer}, partitions $\lambda$ and $\mu$ are in the same $K$-block if and only if there is a sequence of partitions
				\[ \lambda=\lambda^{(1)},\lambda^{(2)},\dots,\lambda^{(t)}=\mu \]
			and $B_n^K(\delta)$-modules
				\[ M^{(i)}\leq\Delta_n^K(\lambda^{(i)}) ~~~~~ (1<i\leq t) \]
			such that for each $1\leq i<t$
				\[ \mathrm{Hom}_{B_n^K(\delta)}(\Delta_n^K(\lambda^{(i)}),\Delta_n^K(\lambda^{(i+1)}/M^{(i+1)})\neq0 \]
			The application of Lemma \ref{lem:modred} then shows
				\[ \mathrm{Hom}_{B_n^k(\bar\delta)}(\Delta_n^k(\lambda^{(i)}),\Delta_n^k(\lambda^{(i+1)}/\overline{M^{(i+1)}})\neq0 \]
			giving us such a sequence of partitions linking $\lambda$ and $\mu$, except now we are working with $B_n^k(\bar\delta)$-modules.
		\end{proof}
		
Moreover, we have an interpretation of $d^r_{(i,j)}$ as the action of an element of a Weyl group:

	\begin{lemma}
		If two partitions $\lambda,\mu\in\Lambda_n$, both represented with $b$ beads on an abacus with $p$ runners, are related by the move $d_{(i,j)}^r(\lambda)=\mu$, then \mbox{$\mu\in W\cdot_{\delta'}\lambda$} for $\delta'=rp-2b+2$.
		\label{lem:moveweyl}
	\end{lemma}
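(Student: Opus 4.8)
The plan is to translate the bead move into the linear-algebraic language of the shifted $W$-action and then exhibit $d^r_{(i,j)}$ explicitly as an element of $W$. Writing $\rho=\rho(\delta')$, the shifted action satisfies $w\cdot_{\delta'}x=w(x+\rho)-\rho$, so $\mu\in W\cdot_{\delta'}\lambda$ is equivalent to $\mu+\rho$ and $\lambda+\rho$ lying in the same ordinary $W$-orbit. The first step is therefore to express the coordinates of $\lambda+\rho$ in terms of the abacus. Since bead $i$ of $\lambda$ sits in position $\lambda_i-i+b$, a direct computation using $\delta'=rp-2b+2$ (so that $\delta'/2=rp/2-b+1$) gives $(\lambda+\rho)_i=\lambda_i-i+1-\tfrac{\delta'}{2}=(\lambda_i-i+b)-\tfrac{rp}{2}$; that is, the $i$-th coordinate of $\lambda+\rho$ is exactly the position of bead $i$ shifted by $-\tfrac{rp}{2}$. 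This identity is precisely why the lemma singles out the value $\delta'=rp-2b+2$: it aligns the centre of the move with the $\rho$-shift.

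Next I would compute the effect of the move on bead positions. A bead in position $q$ lies on runner $t\equiv q\pmod p$ in row $s=\lfloor q/p\rfloor$, so $q=sp+t$. Sliding it up $s$ steps to the top of runner $t$, across the arc to the top of the paired runner $p-t$, and then back down, with total path length $r$, lands it in row $r-s-1$ of runner $p-t$, i.e. in position $(r-s-1)p+(p-t)=rp-q$. (In the runner-$0$ case, where the bead visits position $0$ once and returns, it ends in row $r-s$ of runner $0$, again giving $rp-q$.) Thus $d^r$ acts on positions by the reflection $q\mapsto rp-q$ about $\tfrac{rp}{2}$, which in particular interchanges runners $t$ and $p-t$ as required. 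In the shifted coordinates of the previous step this reads $\beta-\tfrac{rp}{2}\mapsto-(\beta-\tfrac{rp}{2})$, so the move negates the coordinate of each bead it touches.

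It remains to assemble these observations. The move $d^r_{(i,j)}$ negates precisely the coordinates indexed by beads $i$ and $j$ of the vector $\lambda+\rho$ and fixes the rest, so the multiset of coordinates of $\mu+\rho$ is obtained from that of $\lambda+\rho$ by two sign changes together with the reordering needed to return the $\beta$-sequence to strictly decreasing order. An even number of sign changes followed by a permutation of coordinates is exactly an element of the type-$D$ Weyl group $W$; concretely one may take $w=s_{\varepsilon_i+\varepsilon_j}s_{\varepsilon_i-\varepsilon_j}$ composed with the sorting permutation. Hence $\mu+\rho=w(\lambda+\rho)$ for some $w\in W$, and unwinding the shifted action gives $\mu=w(\lambda+\rho)-\rho=w\cdot_{\delta'}\lambda$, so that $\mu\in W\cdot_{\delta'}\lambda$.

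The main obstacle is the positional computation in the middle step: one must count the steps of the over-the-arc move correctly, treat the runner-$0$ case separately, and check both possible directions of travel, in order to be certain the move is genuinely the reflection $q\mapsto rp-q$. Once that is pinned down the remainder is bookkeeping, the only real subtlety being to confirm that the number of sign changes is even, so that $w$ lands in the type-$D$ group $W$ rather than merely in the larger type-$B$/$C$ signed-permutation group.
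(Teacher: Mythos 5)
Your proof is correct and follows essentially the same route as the paper's: compute that $d^r_{(i,j)}$ sends a bead in position $q$ to position $rp-q$, observe that the choice $\delta'=rp-2b+2$ makes the coordinates of $\lambda+\rho(\delta')$ equal to the bead positions shifted by $-\tfrac{rp}{2}$, and conclude that the move negates two coordinates up to a sorting permutation, hence lies in the type-$D$ group. The paper reaches the same conclusion by a slightly more laborious manipulation of the $\beta$-sequence (introducing auxiliary vectors $\eta$ and $\omega$ and exhibiting the element as $ws_{\varepsilon_i+\varepsilon_j}$), so your repackaging via the reflection $q\mapsto rp-q$ about $\tfrac{rp}{2}$ is just a cleaner presentation of the identical argument.
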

	
		\begin{proof}
			Suppose $\lambda=(\lambda_1,\dots,\lambda_l)$, then
				\[\beta_\lambda=(\lambda_1-1+b,\lambda_2-2+b,\dots,\lambda_l-l+b,-(l+1)+b,\dots,0) \]
			The move $d^r_{(i,j)}$ can be represented in the $\beta$-sequence. Indeed, we can decompose $r=r_1+1+r_2$, where $r_1$ is the number of positions needed to move to the top of the runner, then $1$ for over the arc, and finally $r_2$ positions down. Then:
				\begin{enumerate}
					\item Sliding bead $i$ up by $r_1$ spaces puts that bead in position $\lambda_i-i+b-r_1p$.
					\item Moving across to the paired runner places it in position $p-(\lambda_i-i+b-r_1p)$.
					\item Finally, sliding it down by $r_2$ spaces lands it in position
						\[p-(\lambda_i-i+b-r_1p)+r_2p = -\lambda_i+i-b+rp\]
				\end{enumerate}
			A similar process leads to the same result if the bead is on runner zero: we just write $r=r_1+r_2$ and omit Step 2.

			This leaves us with
				\begin{equation*}
					d^r_{(i,j)}(\beta_\lambda)=(\lambda_1-1+b,\dots,-\lambda_i+i-b+rp,\dots,-\lambda_j+j-b+rp,\dots,0)
				\end{equation*}
			Note that this will not be the $\beta$-sequence of a partition, i.e. will not be a strictly decreasing sequence. However by re-arranging the sequence we see that setwise it must be that of $\beta_\mu$, since the beads occupy the same positions in the abacus. Therefore, for an appropriate element $w\in S_b$ of the symmetric group on $b$ letters, we have:
				\[w^{-1}(\beta_\mu)=(\lambda_1-1+b,\dots,\underbrace{-\lambda_j+j-b+rp}_\text{$i$-th place},\dots,\underbrace{-\lambda_i+i-b+rp}_\text{$j$-th place},\dots)\]
			and hence
				\begin{align}
					\beta_\lambda-w^{-1}\beta_\mu&=(\lambda_i+\lambda_j-(i+j)+2b-rp)(\varepsilon_i+\varepsilon_j)\notag\\
											&=(\lambda+\rho(rp-2b+2),\varepsilon_i+\varepsilon_j)(\varepsilon_i+\varepsilon_j)\notag\\
											&=(\lambda+\rho(\delta'),\varepsilon_i+\varepsilon_j)(\varepsilon_i+\varepsilon_j)
					\label{eqn:wbeta}
				\end{align}
			making the substitution $\delta'=rp-2b+2$ and using the inner product defined in \eqref{eqn:innerprod}.\\
			\\
			Now define
				\[\eta=(1,2,3,\dots,b),~~~~~\omega=(1,1,\dots,1)\]
			We may then rewrite \eqref{eqn:wbeta} as
				\[w^{-1}(\mu-\eta+b\omega)=\lambda-\eta+b\omega-(\lambda+\rho(\delta'),\varepsilon_i+\varepsilon_j)(\varepsilon_i+\varepsilon_j)\]
			and noticing that $\omega$ is invariant under the action of $S_b$:
				\begin{align*}
					&w^{-1}(\mu-\eta)=\lambda-\eta-(\lambda+\rho(\delta'),\varepsilon_i+\varepsilon_j)(\varepsilon_i+\varepsilon_j)\\
					\implies&	w^{-1}\left(\mu+\rho(\delta')+\left(\frac{\delta'}{2}-1\right)\omega\right)
					=\lambda+\rho(\delta')+\left(\frac{\delta'}{2}-1\right)\omega-(\lambda+\rho(\delta'),\varepsilon_i+\varepsilon_j)(\varepsilon_i+\varepsilon_j)\\
					\implies&w^{-1}(\mu+\rho(\delta'))=\lambda+\rho(\delta')-(\lambda+\rho(\delta'),\varepsilon_i+\varepsilon_j)(\varepsilon_i+\varepsilon_j)\\
					\implies&\mu=ws_{\varepsilon_i+\varepsilon_j}(\lambda+\rho(\delta'))-\rho(\delta')
				\end{align*}
			If we write $w=(i_1~j_1)(i_2~j_2)\dots(i_t~j_t)$ as a product of transpositions, then the action of this element is the same as that of $s_{\varepsilon_{i_1}-\varepsilon_{j_1}}s_{\varepsilon_{i_2}-\varepsilon_{j_2}}\dots s_{\varepsilon_{i_t}-\varepsilon_{j_t}}\in W$. So we may assume that $w\in W$.
			Therefore we may obtain $\mu$ by
				\[\mu=ws_{\varepsilon_i+\varepsilon_j}\cdot_{\delta'}\lambda\]
			where $ws_{\varepsilon_i+\varepsilon_j}\in W$. 
		\end{proof}
		
	\begin{cor}
If two partitions $\lambda,\mu\in\Lambda_n$, both represented with $b$ beads on an abacus with $p$ runners, are related by a single move $a_{(i,j)}^r$ or $d_{(i,j)}^r$, then $\mu^T\in\mathcal{B}_n^k(\lambda^T)$
		\label{cor:singmove}
	\end{cor}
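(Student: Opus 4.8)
The plan is to treat the two types of move separately, since each is governed by a different one of the results assembled above, and in both cases to read off a statement about the transposed partitions $\lambda^T,\mu^T$.

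For the move $a_{(i,j)}^r$ I would argue entirely in characteristic $p$. Sliding bead $i$ down and bead $j$ up on their own runners leaves the number of beads on every runner unchanged and does not alter the total $|\lambda|=|\mu|=n-2t$; hence $\lambda$ and $\mu$ have the same $p$-core and the same $p$-weight, so by the abacus form of Nakayama's conjecture (Theorem \ref{thm:nakayama}) the Specht modules $S_k^\lambda$ and $S_k^\mu$ lie in the same block of $kS_{n-2t}$. Because the removal of a rim $p$-hook commutes with transposition, the $p$-core of $\lambda^T$ is the transpose of that of $\lambda$; thus $\lambda^T$ and $\mu^T$ again share a $p$-core and $p$-weight, and $S_k^{\lambda^T},S_k^{\mu^T}$ lie in one block. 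Theorem \ref{thm:symblocks} then upgrades this to $\mu^T\in\mathcal{B}_n^k(\lambda^T)$.

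For the move $d_{(i,j)}^r$ I would pass through characteristic zero and then reduce. Lemma \ref{lem:moveweyl} gives $\mu\in W\cdot_{\delta'}\lambda$ with $\delta'=rp-2b+2$. Regarding $\delta'$ as an integer parameter and working over $K$, Theorem \ref{thm:geomblocks} places $\Delta_n^K(\lambda^T)$ and $\Delta_n^K(\mu^T)$ in the same $K$-block of $B_n^K(\delta')$, i.e. $\mu^T\in\mathcal{B}_n^K(\lambda^T)$ at the parameter $\delta'$; here the transpose appears automatically, since the block criterion of \cite{geom} is phrased for transposed labels. Proposition \ref{prop:blocks}, applied to the pair $\lambda^T,\mu^T$ over the $p$-modular system at the parameter $\delta'$, then yields $\mu^T\in\mathcal{B}_n^k(\lambda^T)$ for the reduction $\bar{\delta'}$. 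It remains to identify the target algebra: condition \eqref{eqn:beads} reads $2b\equiv 2-\delta\pmod p$, so $\delta'=rp-2b+2\equiv 2-2b\equiv\delta\pmod p$, whence $\bar{\delta'}=\bar\delta$ and $B_n^k(\bar{\delta'})=B_n^k(\bar\delta)$, finishing this case.

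I expect the main obstacle to be the final descent in the $d$-move case. Lemma \ref{lem:moveweyl} a priori only records an orbit relation for the auxiliary parameter $\delta'$, which differs from $\delta$ as an integer; it is exactly the congruence forced by \eqref{eqn:beads} that lets this datum survive modular reduction and become a genuine block relation at the parameter $\bar\delta$ of interest. Alongside this I would need to confirm that invoking the characteristic-zero results Theorem \ref{thm:geomblocks} and Proposition \ref{prop:blocks} at the value $\delta'$ is legitimate, and to keep the transposes straight, since they enter the Brauer block statements but not the symmetric-group bookkeeping used for the $a$-move.
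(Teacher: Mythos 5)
Your proposal is correct and follows essentially the same route as the paper: Nakayama's conjecture plus Theorem \ref{thm:symblocks} for the $a$-move, and Lemma \ref{lem:moveweyl}, Theorem \ref{thm:geomblocks}, and Proposition \ref{prop:blocks} together with the congruence $\delta'\equiv\delta\pmod p$ for the $d$-move. Your explicit remark that transposition commutes with rim-hook removal (so the block relation passes to $\lambda^T,\mu^T$ in the symmetric-group case) is a detail the paper leaves implicit, but it is not a departure in method.
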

	
		\begin{proof}
			If $\mu=a^r_{(i,j)}(\lambda)$ then we apply Nakayama's Conjecture (Theorem \ref{thm:nakayama}) and Theorem \ref{thm:symblocks} to show this result.

			If $\mu=d^r_{(i,j)}(\lambda)$, then by Lemma \ref{lem:moveweyl}
				\[\mu\in W\cdot_{\delta'}\lambda\]
			where $\delta'=rp-2b+2$.
			By Theorem \ref{thm:geomblocks}, we have that $\lambda^T$ and $\mu^T$ are in the same $B_n^K(\delta')$-block. Proposition \ref{prop:blocks} then shows that they must then be in the same $B_n^k(\bar\delta)$-block, since our condition on $b$
				\[2b\equiv2-\delta\text{ (mod $p$)}\] implies $\delta'\equiv\delta$ (mod $p$), so also reduces to $\bar\delta$.
		\end{proof}
		
	\begin{cor}
		If two partitions $\lambda,\mu\in\Lambda_n$, both represented with $b$ beads on an abacus with $p$ runners, are related by a sequence of moves $a_{(i,j)}^r$ and $d_{(i,j)}^r$, then $\mu^T\in\mathcal{B}^k(\lambda^T)$
		\label{cor:red}
	\end{cor}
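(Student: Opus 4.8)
The plan is to reduce to the single-move case established in Corollary \ref{cor:singmove}, applied at a sufficiently large ambient level $N$, and then to chain the resulting block memberships together using the transitivity of the ``same block'' relation and the stability of blocks under the embeddings $\mathcal{B}_N^k(\kappa)\subseteq\mathcal{B}_{N+2}^k(\kappa)$. Write the given sequence as $\lambda=\nu^{(0)},\nu^{(1)},\dots,\nu^{(s)}=\mu$, where each $\nu^{(i+1)}$ is obtained from $\nu^{(i)}$ by a single move $a_{(\cdot,\cdot)}^r$ or $d_{(\cdot,\cdot)}^r$. Since each elementary move is a rearrangement of the same $b$ beads on the same $p$ runners, every $\nu^{(i)}$ is represented on this common abacus and has at most $b$ parts.

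First I would check that each move changes $|\nu^{(i)}|$ by an even amount. An $a$-move leaves the size unchanged, as it adds and removes the same number $r$ of rim $p$-hooks. For a $d$-move, the computation in the proof of Lemma \ref{lem:moveweyl} shows that $|\nu^{(i)}|-|\nu^{(i+1)}|$ is twice an integer: comparing the coordinate sums of $\beta_{\nu^{(i)}}$ and $w^{-1}\beta_{\nu^{(i+1)}}$ in \eqref{eqn:wbeta}, the right-hand side is a scalar multiple of $\varepsilon_i+\varepsilon_j$ and hence has even coordinate sum, while the coordinate sums of the $\beta$-sequences differ from the partition sizes by a constant depending only on $b$. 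Consequently all the $\nu^{(i)}$ have sizes congruent to $n$ modulo $2$.

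With this in hand, set $N=\max_i|\nu^{(i)}|$, increased by $2$ if necessary so that $N\equiv n\pmod 2$. Then every $\nu^{(i)}$, and in particular every transpose $(\nu^{(i)})^T$, lies in $\Lambda_N$, while consecutive partitions $\nu^{(i)},\nu^{(i+1)}$ are still related by a single move. Applying Corollary \ref{cor:singmove} at level $N$ gives $(\nu^{(i+1)})^T\in\mathcal{B}_N^k((\nu^{(i)})^T)$ for each $i$, i.e.\ $(\nu^{(i)})^T$ and $(\nu^{(i+1)})^T$ label cell modules in the same $B_N^k(\bar\delta)$-block. Since lying in a common block is an equivalence relation on $\Lambda_N$, transitivity yields $\mu^T=(\nu^{(s)})^T\in\mathcal{B}_N^k(\lambda^T)$. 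Finally $\mathcal{B}_N^k(\lambda^T)\subseteq\mathcal{B}^k(\lambda^T)$ by the definition of the limiting block, so $\mu^T\in\mathcal{B}^k(\lambda^T)$, as required.

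I expect the only genuine obstacle to be the bookkeeping of the previous two paragraphs: one must guarantee that a single ambient level $N$ accommodates every intermediate partition, which rests on the finiteness of the sequence (providing the size bound) and on the parity invariance of the elementary moves (so that they all lie in $\Lambda_N$ for one $N$ of the correct parity). This is exactly why the conclusion is stated for the limiting block $\mathcal{B}^k$ rather than $\mathcal{B}_n^k$: an intermediate $\nu^{(i)}$ may have size exceeding $n$, so Corollary \ref{cor:singmove} cannot be invoked at level $n$ itself, but only at the larger level $N$, after which the embedding $\mathcal{B}_N^k(\lambda^T)\subseteq\mathcal{B}^k(\lambda^T)$ carries the result back into the limit.
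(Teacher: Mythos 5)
Your proof is correct and follows essentially the same route as the paper: decompose the sequence into single moves, apply Corollary \ref{cor:singmove} to each consecutive pair, and pass to the limiting block. The only difference is one of bookkeeping --- the paper applies the single-move corollary at the varying levels $n_l=\max(|\lambda^{(l)}|,|\lambda^{(l+1)}|)$ and concludes directly, whereas you work at a single uniform level $N$ and verify the parity invariance of the moves explicitly; both of these refinements are sound and merely make explicit what the paper leaves implicit.
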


		\begin{proof}
			Let
				\[ \lambda=\lambda^{(1)},\lambda^{(2)},\dots,\lambda^{(t)}=\mu \]
			be a sequence of partitions such that

				\[\lambda^{(l+1)}=	\begin{cases}
									a^{r_l}_{(i_l,j_l)}\lambda^{(l)}~~~\text{or}~~~ \\
									d^{r_l}_{(i_l,j_l)}\lambda^{(l)}
  								\end{cases}\]
			for some values $r_l\in\mathbb{Z}$ and $1\leq i_l<j_l\leq b$ $(1\leq l<t)$.\\
			Then by applying Corollary \ref{cor:singmove} we have \mbox{$\lambda^{(l+1)}\in\mathcal{B}_{n_l}^k(\lambda^{(l)})$}, where 
			$n_l=\text{max}(|\lambda^{(l)}|,|\lambda^{(l+1)}|)$. Therefore \mbox{$\mu\in\mathcal{B}^k(\lambda)$}.
		\end{proof}
		
	\begin{rem*}
		In Corollary \ref{cor:red}, the values of $n_l$ may exceed $n$, and hence the two partitions may not be in the same block when we restrict to $\mathcal{B}_n^k(\lambda^T)$.
	\end{rem*}
	
	\begin{example}
		Recall the abacus in Figure \ref{fig:abacus1} and the leftmost one in Figure \ref{fig:abacus2}. They represent partitions $\lambda=(5,4)$ and $\mu=(9,4^2)$ respectively, labelling $B_n^k(\bar2)$-cell modules over a field $k$ of characteristic 5. They are linked by the move $\mu=d^5_{(1,3)}(\lambda)$, and we will show that $\mu\in\mathcal{B}_{17}^k(\lambda)$. 

		Indeed, if we set $\delta'=rp-2b+2=25-20+2=7$, then 
			\begin{align*}
				s_{\varepsilon_1+\varepsilon_3}\cdot_{\delta'}\lambda&=s_{\varepsilon_1+\varepsilon_3}(\lambda+\rho(\delta'))-\rho(\delta')\\
					&=\lambda-(\lambda+\rho(\delta'),\varepsilon_1+\varepsilon_3)(\varepsilon_1+\varepsilon_3)\\
					&=(5,4,0,0,\dots)-\left(\left(\frac{3}{2},-\frac{1}{2},-\frac{11}{2},-\frac{13}{2},\dots\right),\varepsilon_1+\varepsilon_3\right)(\varepsilon_1+\varepsilon_3)\\
					&=(5,4,0,0,\dots)+4(\varepsilon_1+\varepsilon_3)\\
					&=(9,4,4,0,0,\dots)=\mu
			\end{align*}
		Therefore the transposed partitions are in the same $B_{17}^K(7)$-block in characteristic 0. Taking the modular reduction, we have the desired result as \mbox{$7\equiv2$ (mod 5)}.
	\end{example}
	
We now want to show that given two partitions in the same $W_p$-orbit, their abaci can be related by a series of moves $a^r_{(i,j)}$ and $d^r_{(i,j)}$, and are therefore in the same $B_n^k(\bar\delta)$-block (for some $n$). We do this by defining a ``$b$-reduced abacus'', and show that we can arrive at it from our partition using only moves $a^r_{(i,j)}$ and $d^r_{(i,j)}$.

	\begin{definition*}
		An abacus with $p$ runners is called \emph{$b$-reduced} if it contains $b$ beads, all of which are on runners $0$ to $\frac{p-1}{2}$, and all beads are as high up on their runners as possible, except for the last bead on runner $0$ which may be one space down.\\
		For a fixed prime $p$, we say a partition is $b$-reduced if when represented using $b$ beads on an abacus with $p$ runners, that abacus is $b$-reduced.
	\end{definition*}
	
	\begin{figure}[H]
		\centering
		\includegraphics[scale=0.6]{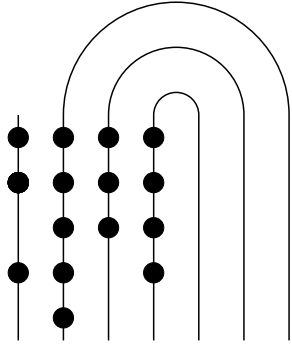}
		\caption{An example of a reduced abacus}
	\end{figure}
	
Given the abacus arrangement of a partition $\lambda$ with $b$ beads, we define the \emph{$b$-reduction of $\lambda$}, written $\overline{\lambda}^b$, to be the $b$-reduced abacus satisfying:\\
\begin{minipage}{1.0\textwidth}
	\begin{itemize}
		\item The number of beads on runner $0$ of $\lambda$ and $\overline{\lambda}^b$ is equal.
		\item For each $1\leq t\leq \frac{p-1}{2}$, the number of beads on runner $t$ of $\overline{\lambda}^b$ is equal to the sum of the number of beads on runners $t$ and $p-t$ of $\lambda$.
		\item $|\overline{\lambda}^b|-|\lambda|\in2\mathbb{Z}$
	\end{itemize}
\end{minipage}
	
	\begin{rem*}
		This latter condition determines whether or not the final bead on runner 0 is moved down a space or not, and thus ensures that the reduction is unique.
	\end{rem*}
	
Example \ref{ex:reduction} below shows such a construction.\\
\\
For a fixed value of $b$ (satisfying the congruence condition \eqref{eqn:beads}), the $W_p$-orbits on partitions represented with $b$ beads are characteristed by the number of beads on runner 0 and the  total number of beads on pairs of runners $t$ and $p-t$ for each $t>0$. As a result, each $W_p$-orbit corresponds to a unique $b$-reduced partition, obtained by taking the reduction of any partition in the orbit that can be represented on an abacus with $b$ beads.

We will now describe an algorithm for constructing $\overline{\lambda}^b$ from $\lambda$, using only combinations of $a^r_{(i,j)}$ and $d^r_{(i,j)}$. In order to make this process more understandable, we use these basic moves to build 4 others which makes the manipulation of the abacus easier:
	\begin{itemize}
		\item[(M1)] We can slide a bead one space up a runner, provided we move another bead down a space. 
	
			This is simply the move $a_{(i,j)}^1$.
			
				\begin{figure}[H]
					\centering
					\includegraphics[scale=0.6]{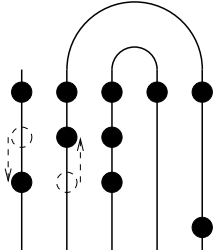}
					\caption{An example of move (M1)}
				\end{figure}

		\item[(M2)] We can move a bead and the one directly below on the same runner together over the top of the abacus to any (unoccupied) position on the paired runner.
	
			This is the move $d_{(i,j)}^r$, where $i$ and $j$ are consecutive beads on the same runner.
			
				\begin{figure}[H]
					\centering
					\includegraphics[scale=0.6]{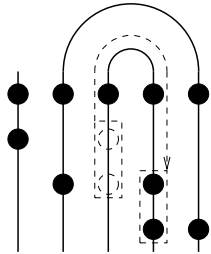}
					\caption{An example of move (M2)}
				\end{figure}

	\item[(M3)] Provided there is a runner with at least two beads on it, we can move any two beads each one space up their runner, or one bead up two spaces.
	
			We do this by combining (M2) and (M1). Suppose we wish to move beads $i$ and $j$ each up by one space. We first choose the runner with at least two beads on it, and let the last two (those lowest down) be $x$ and $y$, ensuring that neither are equal to $i$ or $j$.
			
			We then apply (M1) twice, specifically the moves that push bead $i$ up and $x$ down, then $j$ up and $y$ down. Next we perform $d^r_{(x,y)}$ followed by $d^{r-1}_{(x,y)}$ (for a sufficiently large value of $r$), so that the beads $x$ and $y$ finish in their original places (before the first use of (M1)). Note that we are not re-labelling beads as we move them.
						
				\begin{figure}[H]
					\centering
					\includegraphics[scale=0.5]{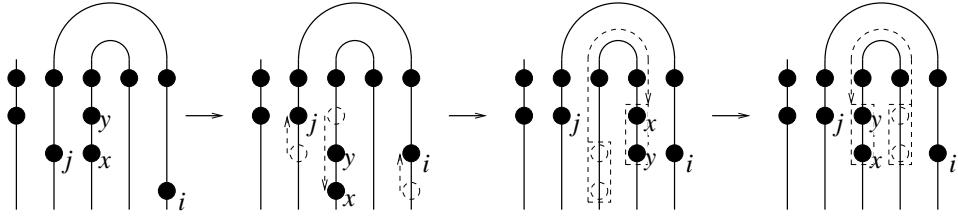}
					\caption{An example of move (M3)}
				\end{figure}
				
			\begin{rem*}
				To simplify this move, we have insisted that $x$ and $y$ are distinct from $i$ and $j$. However we can make modifications to remove this restriction. If we follow the process as given, then after an application of (M1) there will be a bead that moves up and back to its original place, and we can then apply (M2) as usual. An example is shown below in Figure \ref{fig:move3alt}. In all practical purposes however, there will usually be enough beads to allow us to use the move as originally stated.
				\begin{figure}[H]
					\centering
					\includegraphics[scale=0.5]{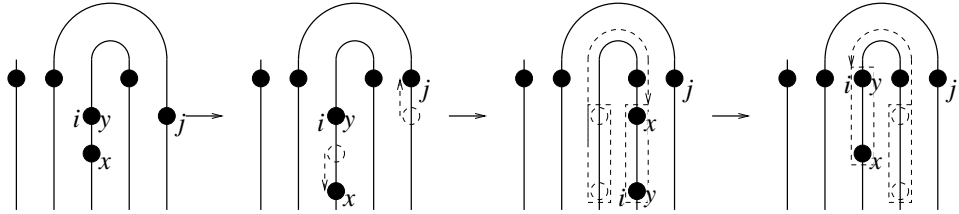}
					\caption{An example of move (M3) with beads $i$ and $y$ equal. Note that in step 2 the bead $i=y$ does not move.}
					\label{fig:move3alt}
				\end{figure}
			\end{rem*}

	\item[(M4)] Provided there is a runner with at least two beads on it, we can move any two beads each one space down their runner, or one bead down two spaces.
	
			This is simply the reverse of the move (M3).
			
	\end{itemize}
	
Now that we have detailed all the necessary moves, we describe an algorithm for obtaining the $b$-reduction of a given partition:

	\begin{enumerate}
		\item Construct the abacus of the partition in the usual way, choosing a large enough value of $b$ so that there are at least 3 beads on runner 0.
		
		\item Using (M3), we may move beads up in pairs so that each one is as high on the runner as it will go. Since we have chosen $b$ so that there are always at least 3 beads on runner 0, we may use this runner when applying (M3). Now there may be a single bead with a space above it. If this is the case, we apply (M1) and move this bead up and the last bead on runner 0 down.
		
		\item Applying (M2) repeatedly, we then slide beads in pairs over from the right side to the left, so that there is at most one bead on the right hand runners. 
		
		\item If there are no more beads on runners $\frac{p+1}{2}$ to $p-1$, then we have the reduced abacus. So assume that there is a bead on runner $t$ in this range. Pair the beads consecutively on runner $p-t$, starting from the one furthest down, and move each pair down by two spaces with (M4). This ensures an empty space in the second position on runner $p-t$ (the first may or may not be filled).
		
		\item Since there are at least 3 beads on runner 0 there must be one in the second position. Move this and the single bead on runner $t$ by 2 spaces over the arc (this is just the move $d^2_{(i,j)}$). The bead on runner 0 returns to the same space, but there are now no beads on runner $t$.
		
		\item If there is an empty space in the first position of runner $p-t$, we use (M3) to slide all the beads up in pairs to fill this gap. If the total number of beads now on the runner is odd, then the final bead cannot be moved in a pair. We then have one of the two following cases:
		
			\begin{enumerate}
				\item The last bead on runner 0 has a space before it. In this case, we can use (M3) to move this and the last bead on runner $t$ each up by one space.
				
				\item The last bead on runner 0 does not have a space before it. In this case, we use (M1) to slide the bead on runner $t$ up and the bead on runner 0 down each by one space.
			\end{enumerate}

		\item Repeat Steps 4-6 until there are no more beads on runners $\frac{p+1}{2}$ to $p-1$. We then have a reduced abacus.
	\end{enumerate}
	
	\begin{example}
		Setting $p=7$ we construct the $b$-reduction of $\lambda=(5^2,4^2,3,2^3,1)$ following the steps above. We describe the process below, and show this on the abacus in Figure \ref{fig:reduction}.
			\begin{enumerate}
				\item Choosing $b=22$ ensures that we satisfy the requirement that there are at least $3$ beads on runner $0$.
				\item We can slide the last beads on runners 5 and 6 up (M3), but we must then move the last beads on runner 0 down one space and the last bead on runner 0 up one space (M1).
				\item After sliding the beads over in pairs with (M2), we are left with a single bead at the top of runner 5.
				\item In pairs, the beads are moved down runner 2 by two spaces each with (M4), leaving 2 empty spaces at the top.
				\item The second bead on runner 0 and the bead on runner 5 are each moved by 2 spaces up and over (the move $d^2_{(i,j)}$), so that there are now no beads on runner 5.
				\item In pairs, the beads are moved up runner 2 by one space each with (M3). This leaves a single bead at the top with an empty space before it. Therefore we use (M3) and push each up one space so that we arrive at the reduced abacus.
			\end{enumerate}
			
			\begin{figure}[H]
				\centering
				\includegraphics[scale=0.473]{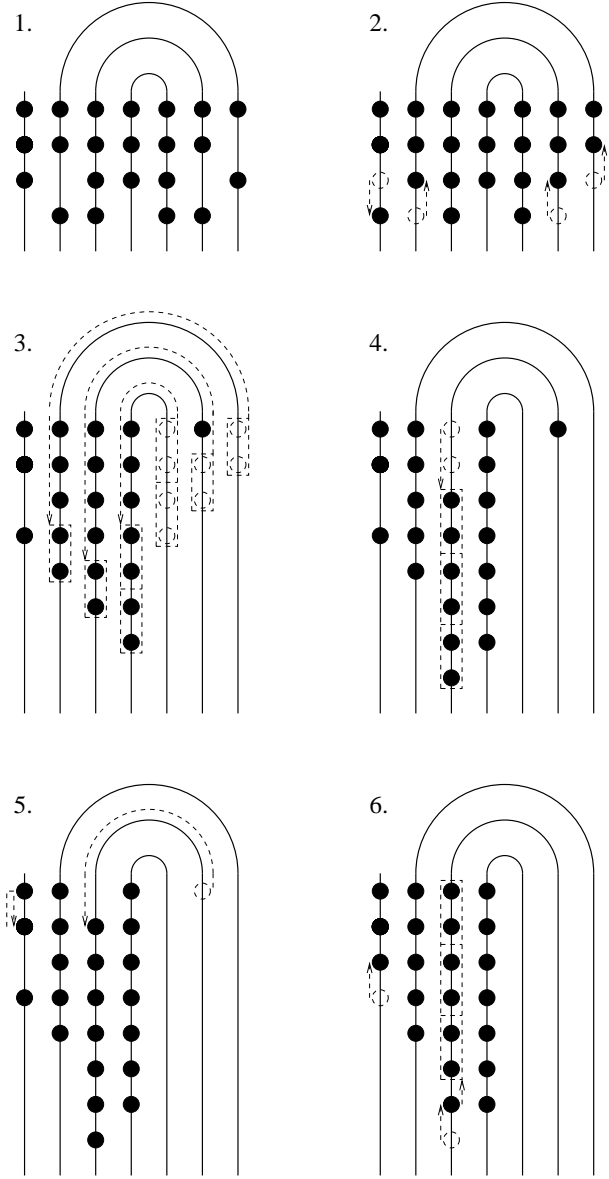}
				\caption{Constructing the $b$-reduction of $(5^2,4^2,3,2^3,1)$}
				\label{fig:reduction}
			\end{figure}
			
		\label{ex:reduction}
	\end{example}
	
We may now state the main theorem. Recall the limiting block $\mathcal{B}^k(\lambda)$ for $B_n^k(\bar\delta)$ containing $\lambda$:

	\begin{thm}
		Let $\lambda,\mu \in \Lambda$. We have $\mu^T\in\mathcal{B}^k(\lambda^T)$ if and only if $\mu\in W_{p}\cdot_{\bar\delta}\lambda$.
		In other words, $\lambda$ and $\mu$ are in the same $W_p$-orbit under the $\bar\delta$-shifted action if and only if they label cell modules in the same $B_n^k(\bar\delta)$-block for some $n$ (and hence all $m\geq n$).
		\label{thm:limorbs}
	\end{thm}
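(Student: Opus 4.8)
The plan is to prove the two implications separately, with the forward direction falling out of the cited results and the reverse direction carrying the real content. For the ``only if'' direction, suppose $\mu^T\in\mathcal{B}^k(\lambda^T)$. By the very definition of the limiting block this means $\mu^T\in\mathcal{B}^k_m(\lambda^T)$ for some $m$, so $\Delta^k_m(\lambda^T)$ and $\Delta^k_m(\mu^T)$ lie in the same $B_m^k(\bar\delta)$-block; Theorem \ref{thm:orbits} then yields $\mu\in W_p\cdot_{\bar\delta}\lambda$ immediately. So I would dispose of this half in a single line.

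The substance is the ``if'' direction. Assuming $\mu\in W_p\cdot_{\bar\delta}\lambda$, I would first observe that $|\lambda|\equiv|\mu|\pmod 2$: each reflection in a type-$D$ root changes the coordinate sum by an even integer, and this parity is preserved under the shifted action, so the two partitions can indeed label cell modules of a common $B_n^k(\bar\delta)$. Next I would fix $b$ large enough to satisfy the congruence \eqref{eqn:beads} and to leave at least three beads on runner $0$ (the congruence pins down $b$ modulo $p$, so such $b$ may be taken arbitrarily large), and represent both partitions on a $p$-runner abacus with $b$ beads. Because membership in a single $W_p$-orbit is exactly equivalent to agreement in the number of beads on runner $0$ and in the total on each paired $\{t,p-t\}$, the two abaci must have the same $b$-reduction, $\overline{\lambda}^b=\overline{\mu}^b$.

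I would then invoke the reduction algorithm (Steps 1--7 above): it transforms $\lambda$ into $\overline{\lambda}^b$, and likewise $\mu$ into $\overline{\mu}^b$, using only the composite moves (M1)--(M4), each of which is assembled from the basic moves $a^r_{(i,j)}$ and $d^r_{(i,j)}$. Since every such move is invertible, I would run the reduction of $\mu$ backwards and splice it onto the reduction of $\lambda$, producing a single chain of $a^r$- and $d^r$-moves carrying $\lambda$ to $\mu$. Corollary \ref{cor:red} then gives $\mu^T\in\mathcal{B}^k(\lambda^T)$, which is the claim; the stabilisation assertion (``hence all $m\geq n$'') follows from the embeddings $\mathcal{B}^k_n(\lambda)\subseteq\mathcal{B}^k_{n+2}(\lambda)$ underlying the definition of the limit. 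It is worth stressing that the chain may pass through abaci of size larger than $n$, which is precisely why the argument produces membership in the \emph{limiting} block rather than in $\mathcal{B}^k_n$, consistent with the remark that the fixed-$n$ converse of Theorem \ref{thm:orbits} fails.

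The main obstacle is establishing that the reduction algorithm genuinely does what is claimed: one must verify that at each step the prescribed beads exist and the target positions are unoccupied (this is exactly why we demand three beads on runner $0$, so that (M3) and (M4) always have a spare runner to borrow from), that the moves invoked really are instances of $a^r$ and $d^r$ (handled by the explicit decompositions of (M1)--(M4)), and that the procedure terminates in the $b$-reduced configuration. The supporting claim that same orbit implies same reduction rests on the congruence \eqref{eqn:beads}, which is what aligns the runner-pairing with the geometry of the $W_p$-action; I would treat that congruence as the linchpin tying the abacus combinatorics to the Weyl-group orbits.
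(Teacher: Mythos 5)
Your proposal is correct and follows essentially the same route as the paper: the forward direction is immediate from Theorem \ref{thm:orbits}, and the converse proceeds by choosing $b$ satisfying the congruence \eqref{eqn:beads} with at least three beads on runner $0$, noting that a common $W_p$-orbit forces $\overline{\lambda}^b=\overline{\mu}^b$, connecting $\lambda$ to $\mu$ through the reduced abacus by moves $a^r_{(i,j)}$ and $d^r_{(i,j)}$, and concluding via Corollary \ref{cor:red}. Your added remarks on parity, invertibility of the moves, and why the conclusion lands in the limiting block rather than a fixed $\mathcal{B}_n^k$ are accurate elaborations of points the paper leaves implicit.
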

	
		\begin{proof}
			If $\mu^T\in\mathcal{B}^k(\lambda^T)$ then $\mu\in W_p\cdot_{\bar\delta}\lambda$ by Theorem \ref{thm:orbits}.

			Conversely, given $\mu\in W_{p}\cdot_{\bar\delta}\lambda$ choose a $b\geq\text{max}(|\lambda|,|\mu|)$ satisfying both the congruence \eqref{eqn:beads} and the requirement that there are at least 3 beads on runner 0 of both abaci of the partitions. Then the $b$-reductions $\overline{\lambda}^b$ and $\overline{\mu}^b$ of $\lambda$ and $\mu$ must coincide, since they have the same number of beads on corresponding pairs of runners. Therefore it is possible to reach the abacus arrangement of $\mu$ from that of $\lambda$ using a sequence of moves $a^r_{(i,j)}$ and $d^r_{(i,j)}$, travelling via the reduced abacus. Corollary \ref{cor:red} then gives the result.
		\end{proof}

\section{Homomorphisms between cell modules}

The results of the previous section give us the limiting blocks of the Brauer algebra, but do not provide any details about the structure of the blocks, in particular the composition factors of cell modules or homomorphisms between them. This section will give new results regarding this.

As mentioned in Section 2 we may view partitions as points in a Euclidean space, and the (affine) Weyl group acts as a group of isometries of this. In particular, the elements $s_{\alpha,rp}$ $(\alpha\in\Phi)$ correspond to reflections through hyperplanes. Let $\lambda$ and $\mu$ be partitions that are related via a reflection in such a hyperplane. We will show that we can assume without loss of generality that $\mu\subseteq\lambda$ or $\mu\trianglelefteq\lambda$ (where $\trianglelefteq$ denotes the dominance order on partitions, see \cite[Section 1]{jameskerber} for details). Moreover, in that case there is a non-zero homomorphism from $\Delta_n^\bbf(\lambda^T)$ to $\Delta_n^\bbf(\mu^T)$ . In particular, this will show that
	\[ [\Delta_n^\bbf(\mu^T):L_n^\bbf(\lambda^T)]\neq0 \]
	whenever the simple module $L_n^\bbf(\lambda^T)$ exists.
\begin{propn}
	Let $\lambda,\mu\in\Lambda_n$. If there is a reflection $s_{\varepsilon_i+\varepsilon_j,rp} \in W_p$ $(1\leq i<j\leq n)$ such that
			\[s_{\varepsilon_i+\varepsilon_j,rp} \cdot_{\delta} \lambda = \mu\]
	then $\mu=\lambda-(\lambda_i+\lambda_j-\delta-rp-i-j-2)(\varepsilon_i+\varepsilon_j)$. In particular, either $\mu\subseteq\lambda$ or $\lambda\subseteq\mu$.
	\label{prop:typed}
\end{propn}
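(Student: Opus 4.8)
The plan is to expand the left-hand side directly from the definitions of the shifted action and of the affine reflection. Writing $\alpha=\varepsilon_i+\varepsilon_j$, the shifted action gives
\[ \mu = s_{\alpha,rp}\cdot_{\delta}\lambda = s_{\alpha,rp}(\lambda+\rho(\delta))-\rho(\delta) = \lambda - \big((\lambda+\rho(\delta),\alpha)-rp\big)\alpha, \]
since the two occurrences of $\rho(\delta)$ cancel outside the bracket. Thus $\mu$ is obtained from $\lambda$ by subtracting a single scalar multiple of $\varepsilon_i+\varepsilon_j$, and the whole statement reduces to evaluating that scalar and reading off its consequences.

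Next I would compute the inner product. Using $(\varepsilon_s,\varepsilon_t)=\delta_{st}$ and the explicit form of $\rho(\delta)$, whose $k$-th coordinate is $-\tfrac{\delta}{2}-(k-1)$, we obtain
\[ (\lambda+\rho(\delta),\varepsilon_i+\varepsilon_j)=\lambda_i+\lambda_j-\delta-(i-1)-(j-1). \]
Subtracting $rp$ then yields precisely the coefficient $c$ recorded in the statement, so that $\mu=\lambda-c(\varepsilon_i+\varepsilon_j)$. This is a purely mechanical substitution, and I expect no difficulty beyond keeping careful track of the shift constants appearing in $\rho(\delta)$.

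Finally, the ``in particular'' clause follows with essentially no further work from the shape of $\mu$. Because $\varepsilon_i+\varepsilon_j$ affects only the $i$-th and $j$-th parts, we have $\mu_i=\lambda_i-c$, $\mu_j=\lambda_j-c$ and $\mu_s=\lambda_s$ for all $s\neq i,j$; crucially, both affected parts move by the \emph{same} scalar $c$ and in the same direction. Hence if $c\geq0$ then $\mu_s\leq\lambda_s$ for every $s$, giving $\mu\subseteq\lambda$, while if $c\leq0$ then $\lambda\subseteq\mu$. Since $s_{\alpha,rp}$ is an involution we also have $s_{\alpha,rp}\cdot_{\delta}\mu=\lambda$, so interchanging the roles of $\lambda$ and $\mu$ lets us assume without loss of generality that $\mu\subseteq\lambda$.

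The one point needing care — and the only place the argument is more than bookkeeping — is to confirm that containment of the multisets of parts is a genuine containment of Young diagrams. This is exactly where I use that both $\lambda$ and $\mu$ are honest partitions in $\Lambda_n$: the condition $\lambda_i\geq\lambda_j$ (from $i<j$) together with $\mu_i-\mu_j=\lambda_i-\lambda_j$ guarantees the modified sequences remain weakly decreasing, so the sign of $c$ alone dictates which nesting holds. Beyond this I anticipate no serious obstacle, the result being in essence a direct translation of the reflection formula into coordinates.
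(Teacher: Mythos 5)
Your proof is correct and follows essentially the same route as the paper's: expand the shifted affine reflection from the definitions, evaluate the inner product against $\rho(\delta)$ coordinatewise, and read off which containment holds from the sign of the resulting scalar. One small remark: your computation (correctly) yields the coefficient $\lambda_i+\lambda_j-\delta-rp-i-j+2$, which agrees with the paper's own derivation, so the $-2$ appearing in the printed statement is a typo and your claim to recover ``precisely the coefficient recorded in the statement'' is literally off in the sign of that constant term.
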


	\begin{proof}
	We have
		\begin{eqnarray*}
			s_{\varepsilon_i+\varepsilon_j,rp}\cdot_\delta\lambda &=& s_{\varepsilon_i+\varepsilon_j,rp}(\lambda+\rho(\delta))-\rho(\delta)\\
				&=& \lambda+\rho(\delta)-((\lambda+\rho(\delta),\varepsilon_i+\varepsilon_j)-rp)(\varepsilon_i+\varepsilon_j)-\rho(\delta)\\
				&=& \lambda-
						\left(\lambda_i+\lambda_j-\frac{\delta}{2}-(i-1)-\frac{\delta}{2}-(j-1)-rp\right)(\varepsilon_i+\varepsilon_j)\\
				&=& \lambda-(\lambda_i+\lambda_j-\delta-rp-i-j+2)(\varepsilon_i+\varepsilon_j)
		\end{eqnarray*}
		If $(\lambda_i+\lambda_j-\delta-rp-i-j+2)\geq0$ then $\mu\subseteq\lambda$, whereas if $(\lambda_i+\lambda_j-\delta-rp-i-j+2)\leq0$ then $\lambda\subseteq\mu$.
	\end{proof}

Recall that in the Young diagram of a partition $\lambda$, the \emph{content} of a node $(x,y)$ is the value of $y-x$.

\begin{definition}[{\cite[Definition 4.7]{blocks}}]
	Two partitions $\mu\subseteq\lambda$ are \emph{$\delta$-balanced} if: (i) there exists a pairing of the nodes in $[\lambda]\backslash[\mu]$ such that the contents of each pair sum to $1-\delta$, and (ii) if $\delta$ is even and the nodes with content $-\frac{\delta}{2}$ and  $\frac{2-\delta}{2}$ in $[\lambda]\backslash[\mu]$ are configured as in Figure \ref{fig:balanced} below then the number of columns in this configuration is even.
	\begin{figure}[H]
	\centering
	\includegraphics[scale=0.6]{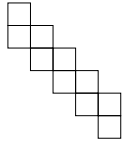}
	\caption{A possible configuration of the boxes of content $-\frac{\delta}{2}$ and $\frac{2-\delta}{2}$ in $[\lambda]\backslash[\mu]$}
	\label{fig:balanced}
	\end{figure}
	We say two partitions $\mu\subseteq\lambda$ are \emph{maximal $\delta$-balanced} if $\mu$ and $\lambda$ are $\delta$-balanced and there is no partition $\nu$ such that $\mu\subsetneq\nu\subsetneq\lambda$ with $\nu$ and $\lambda$ $\delta$-balanced.
	\label{def:balanced}
\end{definition}

From \cite{blocks}, we also have:

\begin{thm}[{\cite[Theorem 6.5]{blocks}}]
	If $\mu\subseteq\lambda$ are maximal $\delta$-balanced partitions, then 
	\[\mathrm{Hom}_{B_n^K(\delta)}(\Delta_n^K(\lambda),\Delta_n^K(\mu))\neq0\]\label{thm:maxbal}
\end{thm}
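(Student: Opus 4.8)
The plan is to prove non-vanishing of the Hom-space in two movements: first reduce to a convenient level of the tower, then build the map and show it survives. For the reduction I would use the functors $F_n,G_n$ of \eqref{eq:fn}--\eqref{eq:gn}. Since $G_n$ is a full embedding with $F_{n+2}G_n\cong\mathrm{id}$, and $\Delta_n^K(\nu)\cong G_{n-2}\Delta_{n-2}^K(\nu)$ whenever $\nu\vdash n-2s$ with $s\geq1$, applying $F_n$ to both $\Delta_n^K(\lambda)$ and $\Delta_n^K(\mu)$ preserves the Hom-space so long as neither partition sits at the top level (where $e_n$ kills the Specht lift). Iterating, I may assume $\lambda\vdash n$, so $\Delta_n^K(\lambda)$ is the lift of $S^\lambda_K$, while $\mu\vdash n-2t$ for some $t\geq1$ (the case $\mu=\lambda$ being trivial). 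Thus it suffices to treat a single atomic linkage, and by Proposition \ref{prop:typed} the maximal balanced pair $(\mu,\lambda)$ is precisely one realised by a type-$D$ reflection $s_{\varepsilon_i+\varepsilon_j,rp}$.

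For the construction I would work directly with the skew shape $[\lambda]\setminus[\mu]$, which by the balanced hypothesis carries a pairing of its $2t$ nodes into arcs whose contents sum to $1-\delta$, and which by maximality is as thin as possible. The candidate homomorphism is defined on the cellular basis: a basis vector of $\Delta_n^K(\mu)$ records $t$ arcs among the $n$ strands together with a tableau of shape $\mu$, and the generator of $\Delta_n^K(\lambda)$ is sent to a signed sum over ways of contracting the $t$ paired nodes into such arcs. The content condition is exactly what forces this assignment to commute with the generators of $B_n^K(\delta)$: transpositions act through the symmetric-group part, where the content of a node is the eigenvalue of the corresponding Jucys--Murphy element, while each arc contraction introduces the loop parameter $\delta$, so pairing into content-$(1-\delta)$ arcs balances the two contributions. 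Checking $B_n^K(\delta)$-equivariance is then a finite computation on generators. Alternatively — and probably cleaner to push through — I would obtain the map by induction on $n$ from the branching filtration of cell modules under restriction to $B_{n-1}^K(\delta)$ together with Frobenius reciprocity for the globalisation functor, lifting a non-zero map between the relevant one-node constituents.

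The hard part is showing that the constructed map is genuinely non-zero, since a formally defined cellular map can collapse. This is where \emph{maximality} of the balanced pair is essential: because no partition lies strictly between $\mu$ and $\lambda$ that is still balanced with $\lambda$, the reflection geometry of \cite{geom} and Theorem \ref{thm:geomblocks} place $\mu$ and $\lambda$ as adjacent weights across a single wall, and for adjacent weights the linkage forces the map to survive. I would confirm this concretely by tracking the image of the distinguished ``highest'' cellular generator through the construction and exhibiting a non-zero target coefficient, or by a composition-factor count along the wall using the known structure of the $K$-blocks. The even-$\delta$ parity condition~(ii) of Definition \ref{def:balanced} is the delicate point: it excludes the type-$D$ degeneracy in which a half-integral weight sits on a wall and the candidate map vanishes identically, so verifying that (ii) guarantees non-vanishing is the crux of the argument.

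Conceptually, the cleanest justification runs through orthosymplectic Schur--Weyl duality: for a suitable rank the relevant $K$-block is equivalent to a block of a parabolic category $\mathcal{O}$ of type $D$, under which cell modules correspond to parabolic Verma modules. The statement then reduces to the classical Bernstein--Gelfand--Gelfand result that a non-zero homomorphism exists between Verma modules whose highest weights are linked by a single reflection with $\mu\leq\lambda$, with the maximal balanced condition encoding adjacency of the weights and condition~(ii) encoding the parabolic wall structure. I would use this picture as the guiding principle while carrying out the explicit cellular verification above.
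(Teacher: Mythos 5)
First, a point of order: the paper does not prove this statement at all --- it is quoted verbatim as \cite[Theorem 6.5]{blocks} and used as a black box, so there is no internal proof to compare against. Judged on its own terms, your proposal is a strategy outline rather than a proof, and the gaps sit exactly where the mathematical content of the theorem lives. The opening reduction via $F_n$ and $G_n$ to the case $\lambda\vdash n$ is fine (this is indeed how \cite{blocks} begins), but the claim that a maximal $\delta$-balanced pair ``is precisely one realised by a type-$D$ reflection $s_{\varepsilon_i+\varepsilon_j,rp}$'' is unjustified and false in general: Proposition \ref{prop:maximal} of this paper gives only the implication from a single reflection (with containment) to a maximal balanced pair, and maximal balanced pairs exist whose skew shape $[\lambda]\setminus[\mu]$ is not supported on two rows/columns, so they are not obtained by one reflection. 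You cannot reduce to ``a single atomic linkage'' this way.

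More seriously, the three steps that constitute the theorem --- writing down the candidate map on a cellular basis, verifying $B_n^K(\delta)$-equivariance, and proving the map is nonzero --- are each deferred to ``a finite computation on generators,'' ``tracking the image of the distinguished generator,'' or an appeal to adjacency of weights across a wall. The Jucys--Murphy/content heuristic and the remark that condition (ii) of Definition \ref{def:balanced} excludes a degenerate wall configuration are reasonable guiding intuitions, but none of this is carried out, and the non-vanishing step in particular is exactly the hard part (as you acknowledge). The actual argument in \cite{blocks} occupies most of a section: it constructs the homomorphism inductively using restriction to $B_{n-1}^K(\delta)$ and Frobenius reciprocity, with explicit combinatorial control of the coefficients to certify non-vanishing. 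The category $\mathcal{O}$ picture in your final paragraph is a plausible conceptual frame but is not available from anything cited in this paper and cannot substitute for the verification. As it stands the proposal identifies the right ingredients but proves nothing.
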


In order to make use of this result, we now prove the following.

\begin{propn}
	Let $\lambda,\mu\in\Lambda_n$. If there is a reflection $s_{\varepsilon_i+\varepsilon_j} \in W$ $(1\leq i<j\leq n)$ such that
			\[s_{\varepsilon_i+\varepsilon_j} \cdot_{\delta} \lambda = \mu\]
		with $\mu\subseteq\lambda$, then $\mu^T$ and $\lambda^T$ are maximal $\delta$-balanced.
		\label{prop:maximal}
\end{propn}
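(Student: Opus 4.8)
The plan is to feed the explicit description of $\mu$ from Proposition \ref{prop:typed} into a direct analysis of the skew shape $[\lambda^T]\setminus[\mu^T]$ and its contents. Since the reflection lies in $W$ rather than $W_p$, I take $r=0$ in Proposition \ref{prop:typed}, giving $\mu=\lambda-c(\varepsilon_i+\varepsilon_j)$ with $c=\lambda_i+\lambda_j-\delta-i-j+2$, and the hypothesis $\mu\subseteq\lambda$ forces $c\geq 0$. Thus $\mu$ differs from $\lambda$ only in rows $i$ and $j$, where the last $c$ nodes are deleted, so $[\lambda]\setminus[\mu]$ is a pair of horizontal strips. Transposing negates contents (the node $(x,y)$ of content $y-x$ becomes $(y,x)$ of content $x-y$), so $[\lambda^T]\setminus[\mu^T]$ is a pair of vertical strips in columns $i$ and $j$ whose content sets are the integer intervals $I_i=[-P,-P+c-1]$ and $I_j=[-Q,-Q+c-1]$, with $P=\lambda_i-i>Q=\lambda_j-j$. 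The relation defining $c$ is equivalent to $P+Q=c+\delta-2$, which says exactly that $I_i$ and $I_j$ are reflections of one another about the point $m=\tfrac{1-\delta}{2}$; this symmetry is the engine of the whole argument.

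First I would check condition (i) of Definition \ref{def:balanced}. Using the reflection symmetry I pair the node of column $i$ with content $t$ to the node of column $j$ with content $1-\delta-t$; the defining relation for $c$ guarantees that each such pair consists of two genuine nodes, one in each column, whose contents sum to $1-\delta$, and as $t$ runs over $I_i$ this matches the two columns bijectively.

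Next is condition (ii), which arises only for $\delta$ even. The relevant nodes have content $-\tfrac{\delta}{2}$ and $\tfrac{2-\delta}{2}=1-\tfrac{\delta}{2}$, the two consecutive integers straddling the half-integer centre $m$. Such nodes occur in a column $k\in\{i,j\}$ exactly when $I_k$ contains both of these values, and since $I_k$ is an interval of consecutive integers this happens if and only if $I_k$ straddles $m$. As $I_i$ and $I_j$ are reflections of each other about $m$, one straddles $m$ precisely when the other does; hence the configuration of Figure \ref{fig:balanced} appears in both of columns $i,j$ or in neither, so the number of columns it occupies is $0$ or $2$, which is even.

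Finally, maximality. Suppose $\mu^T\subsetneq\nu\subsetneq\lambda^T$ with $\nu$ and $\lambda^T$ also $\delta$-balanced. Then $[\lambda^T]\setminus[\nu]$ is got from $[\lambda^T]\setminus[\mu^T]$ by deleting a top segment of each strip, leaving lower segments $A_i\subseteq I_i$ and $A_j\subseteq I_j$ anchored at $-P$ and $-Q$. For $A_i\sqcup A_j$ to admit a pairing summing to $1-\delta$ its content multiset must be symmetric about $m$; comparing the global minimum and maximum and invoking the reflection symmetry forces, whenever both $A_i,A_j$ are non-empty, that nothing was deleted, i.e. $\nu=\mu^T$. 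The only alternative is that one strip survives in full while the other is absorbed entirely into $\nu$, leaving a single column; but a single-column skew shape is $\delta$-balanced only if its contents form an interval symmetric about $m$, and such an interval fails the conditions — for $\delta$ odd it contains an unpaired node of the central content $m$, violating (i), and for $\delta$ even its two central nodes lie in one column, giving an odd column count and violating (ii). Hence no such $\nu$ exists. I expect this last step — untangling the interaction of the partition constraint, the forced pairing, and the parity condition (ii) — to be the main obstacle, whereas properties (i) and (ii) for the pair $(\mu^T,\lambda^T)$ itself fall out quickly once the reflective symmetry of $I_i$ and $I_j$ is established.
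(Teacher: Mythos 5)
Your analysis of the pair $(\mu^T,\lambda^T)$ itself is correct and is essentially the paper's own argument: the paper writes out the two content intervals and pairs them in reverse order, which is exactly your reflection symmetry of $I_i$ and $I_j$ about $m=\frac{1-\delta}{2}$, and its treatment of condition (ii) is the same observation that the two critical boxes cannot form the configuration of Figure \ref{fig:balanced} inside a single column here.

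The gap is in the maximality step. Your claim that multiset symmetry of $A_i\sqcup A_j$ about $m$, together with the min/max comparison, forces $\nu=\mu^T$ whenever both $A_i$ and $A_j$ are non-empty is false when the intervals $I_i$ and $I_j$ overlap, i.e.\ when $P-Q<c$ (equivalently $\lambda_j-j\geq\delta/2$). The min/max comparison only forces $A_j=I_j$; after that, $A_i\sqcup I_j$ is symmetric about $m$ precisely when the deleted set $I_i\setminus A_i$ is an interval symmetric about $m$, and besides $|A_i|=c$ this has the second solution $|A_i|=P-Q$. Concretely, take $\delta=2$, $\lambda=(3,3)$, $i=1$, $j=2$, so $\mu=\varnothing$, $c=3$, $I_1=[-2,0]$, $I_2=[-1,1]$, $m=-\frac{1}{2}$: the partition $\nu=(2)$ satisfies $\mu^T\subsetneq\nu^T\subsetneq\lambda^T$, and $[\lambda^T]\setminus[\nu^T]$ has contents $\{-2,-1,0,1\}$, which pair as $(-2,1)$, $(-1,0)$ with sums $1-\delta=-1$. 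So condition (i) holds for $(\nu^T,\lambda^T)$ and symmetry alone does not force $\nu^T=\mu^T$. What rules this $\nu$ out is condition (ii): in this residual configuration one checks that $-\delta/2\geq -Q$ exactly when $P-Q<c$, so both boxes of content $-\delta/2$ and $(2-\delta)/2$ land in the single column $j$, and the same odd-column-count argument you already deploy in your single-column case applies. You need to add that case explicitly; without it the maximality proof is incomplete for $\delta$ even (for $\delta$ odd the unpaired central content $m$ still kills it). For comparison, the paper argues by chasing forced partners: the final node of row $i$ has content $P$, whose partner $\delta-1-P$ occurs only at the first node of row $j$, forcing all of row $j$ into $[\lambda]\setminus[\nu]$; the overlapping-interval configuration above is exactly the one where the symmetric uniqueness claim for row $j$'s final node breaks down, so this is the genuinely delicate point of the proposition rather than an artefact of your formulation.
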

	\begin{proof}
		We first show that $\mu^T$ and $\lambda^T$ are $\delta$-balanced. For more details, see the proof of \cite[Theorem 4.2]{geom}.
		
		From Proposition \ref{prop:typed} we have $\mu=\lambda-(\lambda_i+\lambda_j-\delta-i-j+2)(\varepsilon_i+\varepsilon_j)$. If $\mu=\lambda$ there is nothing to prove, so we will assume that $\mu\subsetneq\lambda$ and see that $[\lambda]\backslash[\mu]$ consists of two strips of nodes in rows $i$ and $j$. The content of the last node in row $i$ of $[\mu]$ is given by
		\begin{align*}
			\mu_i-i&=\lambda_i-(\lambda_i+\lambda_j-\delta-i-j+2)-i\\
					&=-\lambda_j+j+\delta-2
		\end{align*}
		Therefore the content of the first node in row $i$ of $[\lambda]\backslash[\mu]$ is $\mu_i-i+1=-\lambda_j+j+\delta-1$, and so the nodes in row $i$ of $[\lambda]\backslash[\mu]$ have content
		\[-\lambda_j+j+\delta-1,-\lambda_j+j+\delta,\dots,\lambda_i-(i-2),\lambda_i-(i-1),\lambda_i-i\]
		Similarly, the nodes in row $j$ of $[\lambda]\backslash[\mu]$ have content
		\begin{equation}-\lambda_i+i+\delta-1,-\lambda_i+i+\delta,\dots,\lambda_j-(j-2),\lambda_j-(j-1),\lambda_j-j\label{eqn:pair}\end{equation}
		If we pair these two rows in reverse order, the contents of each pair sum to $\delta-1$. If we take the transpose of the partitions, we then have a pairing of two columns of nodes such that the content of each pair sums to $1-\delta$, satisfying condition (i) of Definition \ref{def:balanced} above. Moreover, since after taking the transpose we are always pairing boxes in different columns, condition (ii) is also satisfied. Hence $\mu^T$ and $\lambda^T$ are $\delta$-balanced.\\
		\\
		Suppose now there is a partition $\nu$ such that $\mu^T\subseteq\nu^T\subsetneq\lambda^T$ with $\lambda^T$ and $\nu^T$ $\delta$-balanced. Then after transposing, $[\lambda]\backslash[\nu]$ consists of nodes from rows $i$ and $j$ and, since $\nu\neq\lambda$, must contain at least one of the final nodes in row $i$ or $j$, say row $i$ (the case of row $j$ is similar).
		
	This final node has content $\lambda_i-i$ and, since $\lambda^T$ and $\nu^T$ are $\delta$-balanced, must be paired with a node of content $\delta-1-(\lambda_i-i)$. But using \eqref{eqn:pair} above, and the fact that $i<j$, we have that the only such node in row $i$ or $j$ of $[\lambda]\backslash[\mu]$ is the first in row $j$. As $[\lambda]\backslash[\nu]$ now must contain the first node in row $j$ of $[\lambda]\backslash[\mu]$, it contains all nodes in row $j$ of $[\lambda]\backslash[\mu]$, in particular the final node in this row.
	
	By repeating the argument of the above paragraph, we see that $[\lambda]\backslash[\nu]$ also contains all the nodes in row $i$ of $[\lambda]\backslash[\mu]$, hence $\nu=\mu$. Therefore $\nu^T=\mu^T$ and we deduce that $\mu^T$ and $\lambda^T$ are maximal $\delta$-balanced.
\end{proof}

We may now show:

\begin{thm}
		Let $\lambda,\mu\in\Lambda_n$. If there is a reflection $s_{\varepsilon_i+\varepsilon_j,rp} \in W_p$ $(1\leq i<j\leq n)$ such that
			\[s_{\varepsilon_i+\varepsilon_j,rp} \cdot_{\delta} \lambda = \mu\]
		then without loss of generality $\mu\subseteq\lambda$ and $\mathrm{Hom}_{B_n^k(\bar\delta)}(\Delta_n^k(\lambda^T),\Delta_n^k(\mu^T))\neq0$. In particular, if $\lambda^T$ is $p$-regular we have a non-zero decomposition number $[\Delta_n^k(\mu^T):L_n^k(\lambda^T)]\neq0$
\end{thm}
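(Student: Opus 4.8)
The plan is to combine the $K$-characteristic homomorphism result of Theorem \ref{thm:maxbal} with the modular reduction machinery of Proposition \ref{prop:blocks}, after first reducing the affine reflection $s_{\varepsilon_i+\varepsilon_j,rp}$ to an ordinary (non-affine) reflection. First I would observe that the condition $s_{\varepsilon_i+\varepsilon_j,rp}\cdot_\delta\lambda=\mu$ in $W_p$ acting with the $\delta$-shifted action is, by Proposition \ref{prop:typed}, equivalent to the relation $\mu=\lambda-(\lambda_i+\lambda_j-\delta-rp-i-j+2)(\varepsilon_i+\varepsilon_j)$; the integer $rp$ simply shifts the scalar. The key point is that one can choose a value $\delta'$ with $\delta'\equiv\bar\delta\pmod p$ such that the \emph{same} pair of partitions $\lambda,\mu$ is related by an ordinary reflection $s_{\varepsilon_i+\varepsilon_j}\cdot_{\delta'}\lambda=\mu$ in $W$: indeed, setting $\delta'=\delta+rp$ absorbs the affine part, and this choice is consistent with the congruence condition since $p\mid rp$. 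By swapping the roles of $\lambda$ and $\mu$ if necessary, we may use the final clause of Proposition \ref{prop:typed} to assume without loss of generality that $\mu\subseteq\lambda$.

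Next I would apply Proposition \ref{prop:maximal} to the reflection $s_{\varepsilon_i+\varepsilon_j}\cdot_{\delta'}\lambda=\mu$ with $\mu\subseteq\lambda$, which tells us that $\mu^T$ and $\lambda^T$ are maximal $\delta'$-balanced. Theorem \ref{thm:maxbal} then yields
\[\mathrm{Hom}_{B_n^K(\delta')}(\Delta_n^K(\lambda^T),\Delta_n^K(\mu^T))\neq0.\]
At this stage we are working over $K$ with the parameter $\delta'$; the task is to transport this non-vanishing Hom down to $k$ with the parameter $\bar\delta$. Since $\delta'\equiv\bar\delta\pmod p$, the modular reduction of $\delta'$ in $k$ is precisely $\bar\delta$, so the $k$-algebras $B_n^k(\overline{\delta'})$ and $B_n^k(\bar\delta)$ coincide. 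I would then invoke Lemma \ref{lem:modred} (equivalently, the argument of Proposition \ref{prop:blocks}) with $X=\Delta_n^R(\lambda^T)$, $Y=\Delta_n^R(\mu^T)$ and $M=0$: a non-zero $K$-homomorphism $\Delta_n^K(\lambda^T)\to\Delta_n^K(\mu^T)$ reduces to a non-zero $k$-homomorphism $\Delta_n^k(\lambda^T)\to\Delta_n^k(\mu^T)$, giving
\[\mathrm{Hom}_{B_n^k(\bar\delta)}(\Delta_n^k(\lambda^T),\Delta_n^k(\mu^T))\neq0\]
as required.

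For the final assertion about decomposition numbers, suppose $\lambda^T$ is $p$-regular, so that the simple head $L_n^k(\lambda^T)$ of the cell module $\Delta_n^k(\lambda^T)$ exists. A non-zero homomorphism $\Delta_n^k(\lambda^T)\to\Delta_n^k(\mu^T)$ has image a non-zero quotient of $\Delta_n^k(\lambda^T)$; by cellularity any non-zero quotient of a cell module has the simple head $L_n^k(\lambda^T)$ as a composition factor, so $L_n^k(\lambda^T)$ appears in the image and hence as a composition factor of $\Delta_n^k(\mu^T)$. This gives $[\Delta_n^k(\mu^T):L_n^k(\lambda^T)]\neq0$. I expect the main obstacle to be the bookkeeping around the parameter shift: one must verify carefully that replacing $\delta$ by $\delta'=\delta+rp$ genuinely realises the affine reflection as an ordinary reflection on the \emph{same} partitions $\lambda,\mu$, and that this $\delta'$ reduces to $\bar\delta$ modulo $p$ so that the $K$-to-$k$ reduction lands in the correct algebra. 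The $\delta$-balanced condition (ii) of Definition \ref{def:balanced}, which concerns the parity of a column configuration when $\delta$ is even, also needs to be checked for $\delta'$ rather than $\delta$, but as in Proposition \ref{prop:maximal} the transpose pairing always lies across distinct columns, so this condition is automatically satisfied.
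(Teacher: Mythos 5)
Your proposal is correct and follows essentially the same route as the paper: rewrite $s_{\varepsilon_i+\varepsilon_j,rp}\cdot_\delta$ as the ordinary reflection $s_{\varepsilon_i+\varepsilon_j}\cdot_{\delta+rp}$, invoke Propositions \ref{prop:typed} and \ref{prop:maximal} together with Theorem \ref{thm:maxbal} to get a non-zero Hom over $K$ with parameter $\delta+rp$, and then reduce modulo $p$ via Lemma \ref{lem:modred}, noting $\delta+rp\equiv\delta\pmod p$. The concluding cellularity argument for the decomposition number also matches the paper's.
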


	\begin{proof}
		 If $s_{\varepsilon_i+\varepsilon_j,rp} \cdot_{\delta} \lambda = \mu$ then by Proposition $\ref{prop:typed}$ we have either $\mu\subseteq\lambda$ or $\lambda\subseteq\mu$. Since $s_{\varepsilon_i+\varepsilon_j,rp}$ is a reflection we may swap $\lambda$ and $\mu$ if necessary and always assume the former case.
		 
		 Next, notice that
				\begin{eqnarray*}
					s_{\varepsilon_i+\varepsilon_j,rp}\cdot_\delta\lambda &=& s_{\varepsilon_i+\varepsilon_j,rp}(\lambda+\rho(\delta))-\rho(\delta)\\
						&=& \lambda+\rho(\delta)-((\lambda+\rho(\delta),\varepsilon_i+\varepsilon_j)-rp)(\varepsilon_i+\varepsilon_j)-\rho(\delta)\\
						&=& \lambda+\rho(\delta)-
						\left(\lambda_i+\lambda_j-\frac{\delta}{2}-(i-1)-\frac{\delta}{2}-(j-1)-rp\right)(\varepsilon_i+\varepsilon_j)-\rho(\delta)\\
						&=& \lambda+\rho(\delta+rp)-
						\left(\lambda_i+\lambda_j-\frac{\delta+rp}{2}-(i-1)-\frac{\delta+rp}{2}-(j-1)\right)(\varepsilon_i+\varepsilon_j)-
						\rho(\delta+rp)\\
						&=& \lambda+\rho(\delta+rp)-(\lambda+\rho(\delta+rp),\varepsilon_i+\varepsilon_j)(\varepsilon_i+\varepsilon_j)-\rho(\delta+rp)\\
						&=& s_{\varepsilon_i+\varepsilon_j}\cdot_{\delta+rp}\lambda
				\end{eqnarray*}
		Therefore we have $s_{\varepsilon_i+\varepsilon_j}\cdot_{\delta+rp}\lambda=\mu$ with $\mu\subseteq\lambda$, so by Proposition \ref{prop:maximal} we see that $\lambda^T$ and $\mu^T$ are maximal $(\delta+rp)$-balanced. Theorem \ref{thm:maxbal} then shows that
				\[\mathrm{Hom}_{B_n^K(\delta+rp)}(\Delta_n^K(\lambda^T),\Delta_n^K(\mu^T))\neq0\]
		As the cell modules have a basis over $R$ (see Section 2), we can consider the $p$-modular reductions of these and using Lemma \ref{lem:modred} above conclude that,
			\[ \mathrm{Hom}_{B_n^k{(\bar\delta)}}(\Delta_n^k(\lambda^T),\Delta_n^k(\mu^T))\neq0 \]
			
		If now we assume that $\lambda^T$ is $p$-regular, then the simple module $L_n^k(\lambda^T)$ exists and we have a non-zero decomposition number
		\[[\Delta_n^k(\mu^T):L_n^k(\lambda^T)]\neq0\]
	\end{proof}

To prove the corresponding result for reflections of type $s_{\varepsilon_i-\varepsilon_j,rp}$, we will require the following theorem of Carter \& Payne \cite{carterpayne} and an analogue of Proposition $\ref{prop:typed}$.

	\begin{thm}[\cite{carterpayne}]
		Let $\lambda,\mu$ be partitions of n and suppose the Young diagram of $\lambda$ is obtained from that of $\mu$ by raising $d$ nodes from row $j$ to row $i$. If we move the nodes one space at a time, up and to the right, then each will move $\lambda_i-\lambda_j+j-i-d$ spaces. Suppose that this number is divisible by $p^e$, and also that $d<p^e$.\\Then $\mathrm{Hom}_{kS_n}(S^\lambda_k,S^\mu_k)\neq0$
		\label{thm:carterpayne}
	\end{thm}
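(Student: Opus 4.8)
The plan is to prove this entirely inside the representation theory of $kS_n$, by realising both Specht modules inside Young permutation modules and producing the homomorphism as a single James semistandard map whose containment in the target Specht module is governed by one binomial-coefficient congruence. Write $h=\lambda_i-\lambda_j+j-i-d$ for the common raising distance, so the hypotheses read $p^e\mid h$ and $d<p^e$; note that raising nodes from row $j$ to row $i$ makes $\lambda\trianglerighteq\mu$, which is exactly the condition needed for a nonzero map $S^\lambda\to S^\mu$ to be possible. Since $\lambda$ and $\mu$ agree outside rows $i$ and $j$, I would concentrate all of the combinatorics on those two rows, treating the remaining entries as inert passengers.

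First I would recall James's semistandard homomorphism theorem, which gives a $\mathbb{Z}$-basis of $\mathrm{Hom}_{kS_n}(S^\lambda,M^\mu)$ indexed by the semistandard $\lambda$-tableaux of content $\mu$, where $M^\mu$ is the Young permutation module containing $S^\mu$. In the present two-row situation there is one natural such tableau $T_0$, in which the $d$ raised nodes carry the row-two index; I would take the corresponding basis homomorphism $\hat\theta=\hat\theta_{T_0}$ as the candidate. The real content is to show that $\hat\theta$ maps $S^\lambda$ not merely into $M^\mu$ but into $S^\mu$. For this I would use James's kernel intersection theorem, which describes $S^\mu\subseteq M^\mu$ as the intersection of the kernels of the box-moving maps $\psi_t:M^\mu\to M^{\mu(t)}$ (each $\mu(t)$ moving a single node up out of row two). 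The problem thus reduces to checking that $\psi_t\circ\hat\theta=0$ for every $t$.

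The hard part will be this last verification, and it is precisely where the arithmetic enters. Over $K$ the containment must in fact fail, since $\mathrm{Hom}_{KS_n}(S^\lambda_K,S^\mu_K)=0$ for distinct partitions; computing $\psi_t\circ\hat\theta$ over $K$ produces nonzero obstructions whose coefficients are, after the standard reindexing, binomial coefficients of the form $\binom{h}{t}$ with $1\le t\le d$. The hypotheses $p^e\mid h$ and $d<p^e$ force, by Lucas's theorem, all base-$p$ digits of $h$ below level $e$ to vanish while $t<p^e$ is supported there, so that $\binom{h}{t}\equiv0\pmod p$ for every relevant $t$. Hence each obstruction dies modulo $p$, and the kernel intersection theorem then places $\hat\theta(S^\lambda_k)$ inside $S^\mu_k$. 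Nonvanishing needs no separate work: $\hat\theta_{T_0}$ is a member of James's integral basis, so it has leading coefficient $1$ and is automatically nonzero modulo $p$. Together these produce the required nonzero element of $\mathrm{Hom}_{kS_n}(S^\lambda_k,S^\mu_k)$. The single genuine obstacle is organising the computation of $\psi_t\circ\hat\theta$ so that the obstruction really is a clean $\binom{h}{t}$, since this is the point at which the Garnir relations and the content bookkeeping must be handled with care.
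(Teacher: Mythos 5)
The paper does not prove this statement at all: it is quoted, with citation, from Carter and Payne, whose own argument runs through Weyl modules for the general linear group --- the homomorphism is constructed from divided powers of root vectors in the hyperalgebra, the binomial arithmetic appears in the commutation formulae there, and the Specht-module statement is then extracted via the Schur functor. Your route, staying inside $kS_n$ and using James's semistandard basis of $\mathrm{Hom}_{kS_n}(S^\lambda,M^\mu)$ together with the kernel intersection theorem, is genuinely different; it is the strategy of the later combinatorial reproofs of Carter--Payne-type results, and it has the virtue of avoiding $GL_n$ entirely. The overall architecture (produce a map into $M^\mu$, then verify it lands in $S^\mu$ by killing the $\psi$-maps, with the hypotheses $p^e\mid h$ and $d<p^e$ entering through Kummer/Lucas) is the correct one, and the direction of the homomorphism and the nonvanishing of a basis element are handled correctly.

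However, as written the argument has a genuine gap, located exactly where you flag it, and it is not a small one. First, when the rows $i$ and $j$ are not adjacent there are in general several semistandard $\lambda$-tableaux of content $\mu$ (the $d$ extra boxes in row $i$ may be filled by any weakly increasing string of values in $\{i+1,\dots,j\}$, with compensating substitutions cascading through the intermediate rows), and the map that actually factors through $S^\mu$ is a particular linear combination $\sum_T c_T\hat\theta_T$ rather than the single $\hat\theta_{T_0}$; determining the $c_T$ so that all compositions $\psi_{r,t}\circ\bigl(\sum_T c_T\hat\theta_T\bigr)$ cancel is part of the work, not a formality. Second, the claim that every surviving obstruction is ``a clean $\binom{h}{t}$'' is the entire analytic content of the theorem: one must run $\psi_{r,t}$ over \emph{all} rows $r$ and all $1\le t\le\mu_{r+1}$, discard the terms whose tableaux have repeated column entries via James's vanishing lemma, and identify the surviving coefficients, which in the published computations take the form $\binom{h+t}{t}$ or close variants rather than $\binom{h}{t}$ --- still killed by $p^e\mid h$ and $t\le d<p^e$, but one cannot verify that the stated hypothesis is exactly the right divisibility condition until the coefficient is pinned down. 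Without that computation the proposal is a plan rather than a proof; since the result is standard, the efficient course (and the one the paper takes) is simply to cite Carter and Payne.
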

	
	\begin{propn}
		Let $\lambda,\mu\in\Lambda_n$. If there is a reflection $s_{\varepsilon_i-\varepsilon_j,rp} \in W_p$ $(1\leq i<j\leq n)$ such that
			\[s_{\varepsilon_i-\varepsilon_j,rp} \cdot_{\delta} \lambda = \mu\]
		then $\mu=\lambda-(\lambda_i-\lambda_j-i+j-rp)(\varepsilon_i-\varepsilon_j)$. In particular, $|\lambda|=|\mu|$ and either $\mu\trianglelefteq\lambda$ or $\mu\trianglelefteq\lambda$, where $\trianglelefteq$ is the dominance order on partitions.
		\label{prop:typea}
	\end{propn}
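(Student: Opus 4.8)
The plan is to mirror the computation in the proof of Proposition \ref{prop:typed} almost verbatim, exchanging the root $\varepsilon_i+\varepsilon_j$ for $\varepsilon_i-\varepsilon_j$. First I would unwind the shifted action using $w\cdot_\delta x=w(x+\rho(\delta))-\rho(\delta)$ and the defining formula $s_{\alpha,rp}(x)=x-((x,\alpha)-rp)\alpha$ for the affine reflection, obtaining
\[ s_{\varepsilon_i-\varepsilon_j,rp}\cdot_\delta\lambda=\lambda-\bigl((\lambda+\rho(\delta),\varepsilon_i-\varepsilon_j)-rp\bigr)(\varepsilon_i-\varepsilon_j). \]
The sole computation is the inner product $(\lambda+\rho(\delta),\varepsilon_i-\varepsilon_j)$. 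Since the $k$-th coordinate of $\rho(\delta)$ is $-\tfrac{\delta}{2}-(k-1)$, subtracting the $i$-th and $j$-th coordinates cancels both $-\tfrac{\delta}{2}$ terms and leaves $(\lambda+\rho(\delta),\varepsilon_i-\varepsilon_j)=\lambda_i-\lambda_j-i+j$. Substituting gives exactly $\mu=\lambda-(\lambda_i-\lambda_j-i+j-rp)(\varepsilon_i-\varepsilon_j)$. In contrast with the type-$D$ root of Proposition \ref{prop:typed}, the parameter $\delta$ drops out of the coefficient entirely.

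For the ``in particular'' clause I would set $c=\lambda_i-\lambda_j-i+j-rp$, so that $\mu_i=\lambda_i-c$, $\mu_j=\lambda_j+c$, and all other parts agree with those of $\lambda$. Since $\varepsilon_i-\varepsilon_j$ has zero coordinate sum, adding a multiple of it preserves the total and $|\mu|=|\lambda|$ is immediate. For the dominance comparison I would examine the partial sums: because $i<j$, the quantities $\sum_{a\leq k}\mu_a$ and $\sum_{a\leq k}\lambda_a$ agree except in the range $i\leq k<j$, where the former is smaller by exactly $c$. Hence $\mu\trianglelefteq\lambda$ when $c\geq0$ and $\lambda\trianglelefteq\mu$ when $c\leq0$, so the two partitions are always comparable in the dominance order.

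There is no substantive obstacle here: the argument is the type-$A$ analogue of Proposition \ref{prop:typed} and reduces to a short linear-algebra calculation. The only points requiring care are the cancellation of the $\delta$-terms in the inner product (which is precisely what distinguishes this case from the type-$D$ one) and keeping track of the sign of $c$ to fix the direction of the dominance inequality. The hypothesis $\mu\in\Lambda_n$ already guarantees that the transferred coordinates form a genuine partition, so no further monotonicity check is needed.
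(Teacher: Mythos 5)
Your proposal is correct and follows essentially the same route as the paper: unwind the shifted affine reflection, compute $(\lambda+\rho(\delta),\varepsilon_i-\varepsilon_j)=\lambda_i-\lambda_j+j-i$ (with the $\delta$-terms cancelling), and then do a case analysis on the sign of the coefficient $c=\lambda_i-\lambda_j-i+j-rp$ to settle the dominance comparison. Your partial-sums justification of $\mu\trianglelefteq\lambda$ versus $\lambda\trianglelefteq\mu$ is just a slightly more explicit phrasing of the paper's observation that nodes are being moved to a lower or higher row.
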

		
		\begin{proof}
			We have	
			\begin{eqnarray*}
				s_{\varepsilon_i-\varepsilon_j,rp}\cdot_\delta\lambda &=& s_{\varepsilon_i-\varepsilon_j,rp}(\lambda+\rho(\delta))-\rho(\delta)\\
					&=& \lambda+\rho(\delta)-((\lambda+\rho(\delta),\varepsilon_i-\varepsilon_j)-rp)(\varepsilon_i-\varepsilon_j)-\rho(\delta)\\
					&=& \lambda+\rho(\delta)-
						\left(\lambda_i-\lambda_j-\frac{\delta}{2}-(i-1)+\frac{\delta}{2}+(j-1)-rp\right)(\varepsilon_i-\varepsilon_j)-\rho(\delta)\\
					&=& \lambda-\left(\lambda_i-\lambda_j-i+j-rp\right)(\varepsilon_i-\varepsilon_j)
			\end{eqnarray*}
		So the effect of $s_{\varepsilon_i-\varepsilon_j,rp}$ is to remove boxes from one row of the Young diagram of $\lambda$ and add the same number to another row. It is then clear that $|\lambda|=|\mu|$, and it remains to consider the following three cases:
		\begin{itemize}
		\item If $(\lambda_i-\lambda_j-i+j-rp)=0$, then $\lambda=\mu$ and the result follows trivially.
		\item If $(\lambda_i-\lambda_j-i+j-rp)>0$, then we are moving nodes in the Young diagram of $\lambda$ from row $i$ into row $j$. Since $i<j$, we are moving nodes into a lower row and therefore $\mu\trianglelefteq\lambda$.
		\item If $(\lambda_i-\lambda_j-i+j-rp)<0$ then we move the nodes from row $j$ to row $i$. Since $j>i$, we are moving nodes to an earlier row and therefore $\lambda\trianglelefteq\mu$.
		\end{itemize}
	\end{proof}

We can now prove the following:

	\begin{thm}
		Let $\lambda,\mu\in\Lambda_n$, with $\lambda^T$ $p$-regular. If there is a reflection $s_{\varepsilon_i-\varepsilon_j,rp} \in W_p$ such that
			\[s_{\varepsilon_i-\varepsilon_j,rp} \cdot_{\delta} \lambda = \mu\]
		then without loss of generality $\mu\trianglelefteq\lambda$ and $\mathrm{Hom}_{B_n^k(\bar\delta)}(\Delta_n^k(\lambda),\Delta_n^k(\mu))\neq0$. In particular, we have a non-zero decomposition number  $[\Delta_n^k(\mu^T):L_n^k(\lambda^T)]\neq0$
	\end{thm}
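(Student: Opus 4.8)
The plan is to run the exact analogue of the preceding theorem, but with the type-$A$ Carter--Payne homomorphism (Theorem \ref{thm:carterpayne}) replacing the balanced-partition input and with Hartmann--Paget's Theorem \ref{thm:symblocks} in place of Theorem \ref{thm:maxbal}. First I would invoke Proposition \ref{prop:typea}: since $s_{\varepsilon_i-\varepsilon_j,rp}$ is an involution I may interchange $\lambda$ and $\mu$ so as to assume the coefficient $\lambda_i-\lambda_j-i+j-rp$ is non-negative, giving $\mu\trianglelefteq\lambda$ and
\[ \mu=\lambda-(\lambda_i-\lambda_j-i+j-rp)(\varepsilon_i-\varepsilon_j). \]
Because $\varepsilon_i-\varepsilon_j$ preserves the number of boxes, $\lambda$ and $\mu$ are partitions of the same integer, say $n-2t$, and $[\lambda]$ is obtained from $[\mu]$ by raising $d:=\lambda_i-\lambda_j-i+j-rp$ nodes from row $j$ into row $i$. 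Substituting this $d$ into the Carter--Payne displacement $\lambda_i-\lambda_j+j-i-d$ shows that each node moves exactly $rp$ spaces, a multiple of $p$.

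Granting that the hypotheses of Theorem \ref{thm:carterpayne} hold, I obtain $\mathrm{Hom}_{kS_{n-2t}}(S^\lambda_k,S^\mu_k)\neq0$, and Theorem \ref{thm:symblocks} then lifts this to $\mathrm{Hom}_{B_n^k(\bar\delta)}(\Delta_n^k(\lambda),\Delta_n^k(\mu))\neq0$, which is the first assertion. For the decomposition number I would transport this map to the transposed cell modules: the sign-twist duality $S^\nu_k\otimes\mathrm{sgn}\cong(S^{\nu^T}_k)^\ast$ for the symmetric group converts a non-zero homomorphism $S^\lambda_k\to S^\mu_k$ into a non-zero homomorphism between the Specht modules labelled by $\mu^T$ and $\lambda^T$, which Theorem \ref{thm:symblocks} again lifts to the Brauer algebra. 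Since $\lambda^T$ is $p$-regular, $\Delta_n^k(\lambda^T)$ has simple head $L_n^k(\lambda^T)$, so exactly as in the previous theorem the image of the relevant map exhibits $L_n^k(\lambda^T)$ as a composition factor of $\Delta_n^k(\mu^T)$, giving $[\Delta_n^k(\mu^T):L_n^k(\lambda^T)]\neq0$. The bookkeeping needed to reconcile the direction of this map with the transposes (and with the triangularity of the decomposition matrix) is a point I would check carefully.

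The main obstacle is the hypothesis $d<p^e$ in Theorem \ref{thm:carterpayne}. Writing $p^e$ for the exact power of $p$ dividing $rp$, the displacement $rp$ is divisible by $p^e$, but there is no a priori reason for $d$ to be smaller than $p^e$; when $d<p$ the argument above already closes (take $e=1$), but the general case is genuinely harder. Here I expect to factor the single affine reflection as a chain of admissible node-raising moves $\mu=\nu^{(0)},\nu^{(1)},\dots,\nu^{(s)}=\lambda$, chosen according to the base-$p$ expansion of $d$ so that each individual step meets the Carter--Payne divisibility and size bounds, and then to compose the resulting homomorphisms. The delicate point is that a composite of Carter--Payne maps can vanish, so one must show that this particular composite is non-zero --- for instance by identifying it with an explicit semistandard homomorphism, or by an induction on $r$ that keeps the intermediate partitions in the same block via Theorem \ref{thm:limorbs}. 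This non-vanishing of the composite is where the real work lies; the remainder is a routine transcription of the $s_{\varepsilon_i+\varepsilon_j}$ argument.
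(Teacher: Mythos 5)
Your proposal follows the paper's route (Proposition \ref{prop:typea}, then Carter--Payne, then Theorem \ref{thm:symblocks}, then the sign twist to pass to transposes), but it stalls at exactly the point the paper closes in one line: the bound $d<p^e$. You treat this as ``the main obstacle'' and propose to handle $d\geq p$ by factoring the reflection into a chain of Carter--Payne moves whose composite you admit you cannot show is non-zero. That leaves the proof genuinely incomplete --- and the chain is unnecessary, because you have not used the hypothesis that $\lambda^T$ is $p$-regular. The $d$ nodes added to the end of row $i$ when passing from $[\mu]$ to $[\lambda]$ occupy columns $\mu_i+1,\dots,\mu_i+d=\lambda_i$, and since $\lambda_{i+1}\leq\mu_i$ (the only row that grows is row $i$, and $i<j$), each of these columns of $\lambda$ has length exactly $i$. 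Hence $\lambda^T$ contains the part $i$ with multiplicity at least $d$, so $p$-regularity of $\lambda^T$ forces $d<p$ and one may simply take $e=1$. (You should also note, as the paper does, that $r\neq0$ may be assumed, so the displacement $rp$ is a genuine non-trivial multiple of $p$.) With that observation your first two paragraphs are essentially the paper's proof; the speculative third paragraph should be deleted.

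One smaller point to tighten: for the decomposition number you must land a non-zero map \emph{into} $\Delta_n^k(\mu^T)$ from $\Delta_n^k(\lambda^T)$, whose image then witnesses $L_n^k(\lambda^T)$ as a composition factor since $\lambda^T$ is $p$-regular. The paper gets this by applying $-\otimes S_k^{(1^m)}$ to the map $S_k^\lambda\to S_k^\mu$ and invoking Theorem \ref{thm:symblocks} again; your version via duality reverses arrows and would need the extra ``bookkeeping'' you flag, so it is cleaner to use the direct isomorphism $S_k^\nu\otimes S_k^{(1^m)}\cong S_k^{\nu^T}$ as in \cite[Section 7]{jameskerber}.
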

	
		\begin{proof}
		If $s_{\varepsilon_i-\varepsilon_j,rp} \cdot_{\delta} \lambda = \mu$ then by Proposition $\ref{prop:typea}$ we have either $\mu\trianglelefteq\lambda$ or $\lambda\trianglelefteq\mu$. Since $s_{\varepsilon_i-\varepsilon_j,rp}$ is a reflection we may swap $\lambda$ and $\mu$ if necessary and always assume the former case, i.e. that we are raising nodes in the Young diagram. We can also assume $r\neq0$, since otherwise $s_{\varepsilon_i-\varepsilon_j,rp} \cdot_{\delta} \lambda$ cannot be a partition.\\
			\\
			From Proposition \ref{prop:typea} we set $d=\left(\lambda_i-\lambda_j-i+j-rp\right)$ and $e=1$ in the context of Theorem \ref{thm:carterpayne}, and obtain
				\[\lambda_i-\lambda_j-i+j-d=rp\]
			which is divisible by $p$ as $r\neq0$. The condition $d<p^e$ is also satisfied, since if we were able to move $p$ or more nodes then $\lambda^T$ would not be $p$-regular. Therefore we may apply Theorem \ref{thm:carterpayne} to deduce
				\begin{equation}
					\mathrm{Hom}_{kS_m}(S^\lambda_k,S^\mu_k)\neq0 \label{eqn:cphom}
				\end{equation}
			where $m=|\lambda|$, and thus by Theorem \ref{thm:symblocks},
				\[ \mathrm{Hom}_{B_n^k{(\bar\delta)}}(\Delta_n^k(\lambda),\Delta_n^k(\mu))\neq0 \]
			For the final result, recall from \cite[Section 7]{jameskerber} that $S_k^\lambda\otimes S_k^{(1^m)}\cong S_k^{\lambda^T}$. Using this and \eqref{eqn:cphom}, we have
			\[\mathrm{Hom}_{kS_m}(S^{\lambda^T}_k,S^{\mu^T}_k)\neq0\]
			and hence
			\[ \mathrm{Hom}_{B_n^k{(\bar\delta)}}(\Delta_n^k(\lambda^T),\Delta_n^k(\mu^T))\neq0 \]
			by Theorem \ref{thm:symblocks}. Now as $\lambda^T$ is $p$-regular, the simple module $L_n^k(\lambda^T)$ exists and we must then have a non-zero decompostion number
			\[[\Delta_n^k(\mu^T):L_n^k(\lambda^T)]\neq0\]
		\end{proof}


\section*{Acknowledgements}
The author would like to thank Maud De Visscher and Joseph Chuang for their comments, and for many interesting and helpful discussions.


~\\
\textsc{Centre for Mathematical Science, City University London, Northampton Square, London, EC1V 0HB, United Kingdom}\\
\emph{E-mail address:} \verb+oliver.king.1@city.ac.uk+


\begin{thebibliography}{9}

\bibitem{brauer}
  R. Brauer,
  \emph{On algebras which are connected with the semisimple continuous groups},
  Ann. of Math. (1937),
  \textbf{38},
  857-872.

\bibitem{grahamlehrer}
  J. J. Graham and G. I. Lehrer,
  \emph{Cellular algebras},
  Invent. Math. (1996),
  \textbf{123},
  1-34.
  
\bibitem{wenzl}
  H. Wenzl,
  \emph{On the structure of Brauer's centralizer algebras},
  Ann. of Math. (1988),
  \textbf{128},
  173–193.

\bibitem{rui}
  H. Rui,
  \emph{A criterion on the semisimple Brauer algebras},
  J. Comb. Theory Ser. A (2005),
  \textbf{111},
  78-88.  
  
\bibitem{blocks}
  A. Cox, M. De Visscher and P. Martin,
  \emph{The blocks of the Brauer algebra in characteristic zero},
  Representation Theory (2009),
  \textbf{13},
  272-308.
  
\bibitem{cmpx}
  A. Cox, P. Martin, A. Parker and C. Xi,
  \emph{Representation theory of towers of recollement: theory, notes and examples},
  J. Algebra (2006),
  \textbf{302},
  340-360.
  
\bibitem{geom}
  A. Cox, M. De Visscher and P. Martin,
  \emph{A geometric characterisation of the blocks of the Brauer algebra},
  J. London Math. Soc. (2009),
  \textbf{80},
  471-494.

\bibitem{jameskerber}
  G. D. James and A. Kerber,
  \emph{The Representation Theory of the Symmetric Groups},
  Encyclopedia of Mathematics and its Applications, vol. 16,
  Addison-Wesley,
  1981.

\bibitem{green}
  J. A. Green,
  \emph{Polynomial Representations of $GL_n$},
  Lecture Notes in Mathematics 830,
  Springer,
  1980.
   
\bibitem{hartmannpaget}
  R. Hartmann and R. Paget,
  \emph{Young Modules and filtration multiplicities for Brauer algebras},
  Math.Z. (2006),
  \textbf{254},
  333-357.
  
\bibitem{carterpayne}
  R. W. Carter and M. T. J. Payne,
  \emph{On homomorphisms between Weyl Modules and Specht modules}.
  Math. Proc. Camb. Phil. Soc. (1980),
  \textbf{87},
  419.

\end{thebibliography}
\end{document}